\definecolor{bluegray}{rgb}{0.4, 0.6, 0.8}
\numberwithin{equation}{section}
\newtheorem{theorem}{Theorem}[section]
\theoremstyle{plain}
\newtheorem{lemma}[theorem]{Lemma}
\theoremstyle{plain}
\newtheorem{proposition}[theorem]{Proposition}
\theoremstyle{plain}
\theoremstyle{definition}
\newtheorem{example}[theorem]{Example}
\newcommand{\N}{{\mathbb N}}
\newcommand{\R}{{\mathbb R}}
\newcommand{\eps}{\varepsilon}
\newcommand{\beq}{\begin{equation}}
\newcommand{\eeq}{\end{equation}}
\renewcommand{\le}{\leqslant}
\renewcommand{\ge}{\geqslant}
\newcommand{\restr}[2]{\left.#1\right|_{#2}}
\newcommand{\w}{W^{1,p}(\Omega)}
\newenvironment{enumroman}{\begin{enumerate}

}{\end{enumerate}}
\title[Asymmetric Robin problems]{Constant sign and nodal solutions for nonhomogeneous Robin boundary value problems with asymmetric reactions}
\author[A.\ Iannizzotto, M.\ Marras, N.S.\ Papageorgiou]{Antonio Iannizzotto, Monica Marras, and Nikolaos S.\ Papageorgiou}
\address[A.\ Iannizzotto, M.\ Marras]{Department of Mathematics and Computer Science
\newline\indent
University of Cagliari
\newline\indent
Viale L. Merello 92, 09123 Cagliari, Italy}
\email{antonio.iannizzotto@unica.it, mmarras@unica.it}
\address[N.S.\ Papageorgiou]{Department of Mathematics
\newline\indent
National Technical University
\newline\indent
Zografou Campus, Athens 15780, Greece}
\email{npapg@math.ntua.gr}
\subjclass[2010]{35J20, 35J60, 58E05.}
\keywords{Non-linear elliptic equations, Asymmetric reactions, Variational methods.}
\begin{document}

\begin{abstract}
We study a nonlinear, nonhomogeneous elliptic equation with an asymmetric reaction term depending on a positive parameter, coupled with Robin boundary conditions. Under appropriate hypotheses on both the leading differential operator and the reaction, we prove that, if the parameter is small enough, the problem admits at least four nontrivial solutions: two of such solutions are positive, one is negative, and one is sign-changing. Our approach is variational, based on critical point theory, Morse theory, and truncation techniques.
\end{abstract}

\maketitle

\begin{center}
Version of \today\
\end{center}

\section{Introduction}\label{sec1}

\noindent
We study the following nonlinear, nonhomogeneous Robin problem:
\beq\label{rp}
\begin{cases}
-{\rm div}\,a(\nabla u)+\xi(x)|u|^{p-2}u=\lambda g(x,u)+f(x,u) & \text{in $\Omega$} \\
\displaystyle\frac{\partial u}{\partial n_a}+\beta(x)|u|^{p-2}u=0 & \text{on $\partial\Omega$.}
\end{cases}
\eeq
Here $\Omega\subset\R^N$ ($N>1$) is a bounded domain with a $C^2$-boundary $\partial\Omega$, $p>1$, and $a:\R^N\to\R^N$ is a continuous, monotone mapping (hence maximal monotone too) which satisfies certain growth and regularity conditions (see hypotheses ${\bf H}_a$ below). These conditions are mild enough to include in our framework many non-linear operators of interest, such as the $p$-Laplacian, the $(p,q)$-Laplacian, and the generalized mean curvature operator. The potential function $\xi\in L^\infty(\Omega)$ is indefinite (i.e., sign-changing, see hypothesis ${\bf H}_\xi$).
\vskip2pt
\noindent
On the right-hand side, $\lambda>0$ is a parameter and $g,f:\Omega\times\R\to\R$ are Carath\'eodory functions. We assume that for a.a.\ $x\in\Omega$ the mapping $g(x,\cdot)$ is strictly $(p-1)$-sublinear at $\pm\infty$ (concave nonlinearity), while $f(x,\cdot)$ exhibits an asymmetric behavior, being $(p-1)$-superlinear at $+\infty$ and asymptotically $(p-1)$-linear at $-\infty$ (see hypotheses ${\bf H}_g$, ${\bf H}_f$ below). So, in the positive semiaxis we have a competition phenomenon between a concave and a convex nonlinearity, while in the negative semiaxis and in the particular case of the $p$-Laplacian the equation may be resonant with respect to the first eigenvalue.
\vskip2pt
\noindent
In the boundary condition, $\partial u/\partial n_a$ denotes the generalized normal derivative corresponding to the mapping $a$, namely the extension of
\[u\mapsto\langle a(\nabla u),n\rangle, \ u\in C^1(\overline\Omega)\]
to $W^{1,p}(\Omega)$ ($n$ denotes the outward unit normal to $\partial\Omega$). The boundary coefficient $\beta\in C^{0,\alpha}(\partial\Omega)$ is non-negative, and the special case $\beta=0$ corresponds to the Neumann problem (see hypothesis ${\bf H}_\beta$ below).
\vskip2pt
\noindent
In this paper, using variational methods based on the critical point theory, together with suitable truncation/perturbation techniques and Morse theory, we prove that, for $\lambda>0$ small enough, problem \eqref{rp} has at least four nontrivial solutions: two positive, one negative, and one nodal (see Theorem \ref{main} below).
\vskip2pt
\noindent
Recently, elliptic boundary value problems with asymmetric reactions were studied in \cite{MMP,RR,RR1} (semilinear Dirichlet problems with zero potential), \cite{PR} (semilinear Neumann problem with indefinite potential), \cite{DMP,MP1,PR1,PR2} (semilinear Robin problems with indefinite potential). For nonlinear elliptic equations we mention \cite{MP,MMP1} (Dirichlet problems driven by the $p$-Laplacian), \cite{PR3,PZ} (Dirichlet problems driven by the $(p,2)$-Laplacian).
\vskip2pt
\noindent
The paper has the following structure: in Section \ref{sec2} we introduce our hypotheses and main result, and we also establish some preliminary results and notations; in Section \ref{sec3} we deal with constant sign solutions; and in Section \ref{sec4} we investigate extremal constant sign solutions and nodal solutions.

\section{Hypotheses and main result}\label{sec2}

\noindent
We start this section by introducing and commenting the precise hypotheses on all features of problem \eqref{rp}. We begin with the mapping $a$ \text{red}{(please check all hypotheses)}:
\begin{itemize}[leftmargin=1cm]
\item[${\bf H}_a$] $a:\R^N\to\R^N$ is defined by $a(y)=a_0(|y|)y$ for all $y\in\R^N$ with $a_0:\R_+\to\R_+$, and we set for all $t\ge 0$
\[H_0(t)=\int_0^t a_0(\tau)\tau\,d\tau\]
and for all $y\in\R^N$ $H(y)=H_0(|y|)$. Morover:
\begin{enumroman}
\item\label{ha1} $a_0\in C^1(0,+\infty)$, $a_0(t)>0$ for all $t>0$, $t\mapsto a_0(t)t$ is strictly increasing on $(0,+\infty)$, and
\[\lim_{t\to 0^+} a_0(t)t=0, \ \lim_{t\to 0^+}\frac{a_0'(t)t}{a_0(t)}>-1;\]
\item\label{ha2} there exists $\theta\in C^1(0,+\infty)$ s.t.\ for all $t>0$
\[c_0\le\frac{\theta'(t)t}{\theta(t)}\le c_1, \ c_2t^{p-1}\le\theta(t)\le c_3(t^{\sigma-1}+t^{p-1}) \ (c_0,c_1,c_2,c_3>0,\,1\le\sigma<p),\]
and for all $y\in\R^N\setminus\{0\}$
\[|\nabla a(y)|\le c_4\frac{\theta(|y|)}{|y|} \ (c_4>0);\]
\item\label{ha3} for all $y,z\in\R^N$, $y\neq 0$
\[\langle\nabla a(y)z,z\rangle\ge\frac{\theta(|y|)|z|^2}{|y|};\]
\item\label{ha4} there exists $r\in (1,p]$ s.t.\ $t\mapsto H_0(t^\frac{1}{r})$ is convex,
\[\limsup_{t\to 0^+}\frac{r H_0(t)}{t^r}\le c_5 \ (c_5>0),\]
and for all $t\ge 0$
\[pH_0(t)-a_0(t)t^2\ge -c_6 \ (c_6>0).\]
\end{enumroman}
\end{itemize}

\noindent
Hypotheses ${\bf H}_a$ \ref{ha1} - \ref{ha3} are dictated by the nonlinear regularity theory of \cite{L} and the nonlinear maximum principle of \cite{PS}. Hypothesis ${\bf H}_a$ \ref{ha4} serves the needs of our problem but is general enough to include several cases of interest (see Example \ref{exa} below). As a whole, ${\bf H}_a$ implies that $H_0$ is strictly convex and increasing on $\R_+$, and $H$ is convex with $H(0)=0$, $\nabla H(0)=0$, and $\nabla H(y)=a(y)$ for all $y\in\R^N\setminus\{0\}$, i.e., $H$ is the primitive of $a$. This, along with convexity, clearly implies for all $y\in\R^N$
\beq\label{conv}
H(y)\le \langle a(y),y\rangle.
\eeq
Hypotheses ${\bf H}_a$ \ref{ha1} - \ref{ha3} and \eqref{conv} lead to the following properties of $a$ and $H$:

\begin{lemma}\label{ah}
If ${\bf H}_a$ \ref{ha1} - \ref{ha3} hold, then
\begin{enumroman}
\item\label{ah1} $a:\R^N\to\R^N$ is continuous and monotone (hence maximal monotone);
\item\label{ah2} $|a(y)|\le c_7(1+|y|^{p-1})$ for all $y\in\R^N$ ($c_7>0$);
\item\label{ah3} $\displaystyle\langle a(y),y\rangle \ge \frac{c_2|y|^p}{p-1}$ for all $y\in\R^N$;
\item\label{ah4} $\displaystyle\frac{c_2|y|^p}{p(p-1)} \le H(y) \le c_8(1+|y|^p)$ for all $y\in\R^N$ ($c_8>0$).
\end{enumroman}
\end{lemma}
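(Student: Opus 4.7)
My plan is to reduce every estimate in the lemma to the behaviour of the single real-valued function $\phi(t):=a_0(t)t$, $t\ge 0$, which by \ref{ha1} extends continuously to $[0,\infty)$ with $\phi(0)=0$. The key observation is that, for $y\neq 0$ and $|y_0|=1$, differentiating $a(ty_0)=\phi(t)y_0$ in $t$ yields $\nabla a(ty_0)y_0=\phi'(t)y_0$, so that
\[\left\langle\nabla a(y)\,\frac{y}{|y|},\frac{y}{|y|}\right\rangle=\phi'(|y|).\]
Combined with hypotheses \ref{ha2}--\ref{ha3} and the two-sided bounds on $\theta$, this produces the differential inequality
\[c_2\,t^{p-2}\;\le\;\phi'(t)\;\le\;c_4c_3\bigl(t^{\sigma-2}+t^{p-2}\bigr)\qquad\text{for all }t>0.\]

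\medskip
\noindent Integrating from $0$ to $t$ (using $\phi(0^+)=0$) I expect to obtain
\[\frac{c_2}{p-1}\,t^{p-1}\;\le\;\phi(t)\;\le\;c_7\bigl(1+t^{p-1}\bigr),\]
which immediately gives (ii), since $|a(y)|=\phi(|y|)$, and (iii), since $\langle a(y),y\rangle=\phi(|y|)|y|$. A second integration $H_0(t)=\int_0^t\phi(s)\,ds$ then yields (iv) after absorbing the lower-order term $|y|$ in the upper bound into $1+|y|^p$. The only point requiring care is the borderline case $\sigma=1$, in which $t\mapsto t^{\sigma-2}=t^{-1}$ is not integrable near $0$; I will handle this by using continuity of $\phi$ to bound it on $[0,1]$ and then integrating $\phi'$ on $[1,t]$ for $t\ge 1$.

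\medskip
\noindent For (i), continuity of $a$ on $\R^N\setminus\{0\}$ is immediate from $a_0\in C^1(0,+\infty)$, and at the origin follows from $|a(y)|=\phi(|y|)\to 0$. For monotonicity I would argue via convexity of $H$: since $\phi'>0$ and $\phi(0)=0$, $H_0$ is strictly convex and increasing on $[0,\infty)$, so $H=H_0\circ|\cdot|$ is convex on $\R^N$ as the composition of a nondecreasing convex function with a norm; since $\nabla H=a$ on $\R^N\setminus\{0\}$ and $\nabla H(0)=0=a(0)$, the map $a$ is the gradient of a convex function and hence monotone. Maximal monotonicity then follows from the classical fact that a continuous monotone operator from $\R^N$ into $\R^N$ is automatically maximal monotone.

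\medskip
\noindent The main technical subtlety, minor but worth flagging, is precisely the case $\sigma=1$: the upper bound on $\phi'$ is then not integrable at $0$, forcing the splitting argument described above. Everything else reduces to standard one-variable integration built on top of the identification of $\phi'$ with the radial Rayleigh quotient of $\nabla a$.
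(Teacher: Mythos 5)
Your proof is correct, and since the paper states Lemma~\ref{ah} without a proof (it simply asserts that the hypotheses ``lead to'' the listed properties), I can only compare against the standard argument in this literature --- which is exactly the reduction you use. The central computation is right: for $|y_0|=1$, differentiating $a(ty_0)=a_0(t)t\,y_0=\phi(t)y_0$ in $t$ and pairing with $y_0$ gives $\langle\nabla a(y)\tfrac{y}{|y|},\tfrac{y}{|y|}\rangle=\phi'(|y|)$, so ${\bf H}_a$\ref{ha3} and the lower bound on $\theta$ give $\phi'(t)\ge c_2 t^{p-2}$, while ${\bf H}_a$\ref{ha2} and the upper bound on $\theta$ give $\phi'(t)\le c_4c_3(t^{\sigma-2}+t^{p-2})$. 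Integrating (using $\phi(0^+)=0$ from ${\bf H}_a$\ref{ha1} and $p>1$) yields the two-sided bounds on $\phi$, hence \ref{ah2} and \ref{ah3} since $|a(y)|=\phi(|y|)$ and $\langle a(y),y\rangle=\phi(|y|)|y|$; integrating once more gives \ref{ah4}. You correctly flag the one delicate point --- when $\sigma=1$ the term $t^{\sigma-2}$ is not integrable at $0$ --- and your fix (bound $\phi$ on $[0,1]$ by $\phi(1)$ using monotonicity from ${\bf H}_a$\ref{ha1}, integrate $\phi'$ only on $[1,t]$ where $t^{\sigma-2}\le t^{p-2}$) is exactly what is needed; one could also observe that the upper estimate on $\phi'$ is only used for $t\ge 1$. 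For \ref{ah1}, the convexity route ($H_0$ convex and nondecreasing, so $H=H_0\circ|\cdot|$ convex, so $a=\nabla H$ is monotone; continuity plus monotonicity in finite dimensions gives maximal monotonicity) is clean and complete; an equivalent alternative would be to deduce monotonicity directly from ${\bf H}_a$\ref{ha3} via the mean-value form $\langle a(y_1)-a(y_2),y_1-y_2\rangle=\int_0^1\langle\nabla a(y_2+s(y_1-y_2))(y_1-y_2),y_1-y_2\rangle\,ds\ge 0$, but your argument buys the convexity of $H$ for free, which the paper uses anyway (e.g.\ for \eqref{conv}).
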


\noindent
In what follows we shall denote $A:\w\to\w^*$ the nonlinear differential operator defined for all $u,v\in\w$ by
\[\langle A(u),v\rangle = \int_\Omega\langle a(\nabla u),\nabla v\rangle\,dx,\]
which is well defined by virtue of ${\bf H}_a$ \ref{ha2}. Such operator enjoys the $(S)_+$-property, i.e., whenever $(u_n)$ is a sequence in $\w$ s.t.\ $u_n\rightharpoonup u$ in $\w$ and
\[\limsup_n\,\langle A(u_n),u_n-u\rangle \le 0,\]
then $u_n\to u$ in $\w$ (see \cite[p.\ 405]{MMP2}). Here follow some examples:

\begin{example}\label{exa}
The following maps $a:\R^N\to\R^N$ satisfy ${\bf H}_a$:
\begin{itemize}[leftmargin=1cm]
\item[$(a)$] $a(y)=|y|^{p-2}y$, corresponding to the $p$-Laplace operator
\[\Delta_p u= {\rm div}\,(|\nabla u|^{p-2}\nabla u);\]
\item[$(b)$] $a(y)=|y|^{p-2}y+|y|^{q-2}y$ ($1<q<p< +\infty$), corresponding to the $(p,q)$-Laplace operator
\[\Delta_p u+\Delta_q u = {\rm div}\,\big(|\nabla u|^{p-2}\nabla u+|\nabla u|^{q-2}\nabla u\big).\]
\end{itemize}
Such operators arise in problems of mathematical physics, see \cite{CI} (reaction-diffusion equations), \cite{D} (elementary particles), \cite{W} (plasma physics). Further:
\begin{itemize}[leftmargin=1cm]
\item[$(c)$] $a(y)=(1+|y|^2)^\frac{p-2}{2}y$, corresponding to the generalized $p$-mean curvature operator
\[{\rm div}\,\big((1+|\nabla u|^2)^\frac{p-2}{2} \nabla u\big);\]
\item[$(d)$] $a(y)=\big(2\ln(1+|y|^p)+(1+|y|^p)^{-1}\big)y$, corresponding to the operator
\[{\rm div}\,\Big(2\ln(1+|\nabla u|^p)\nabla u+\frac{\nabla u}{1+|\nabla u|^p}\Big);\]
\item[$(e)$] $a(y)=|y|^{p-2}y+|y|^{p-2}(1+|y|^p)^{-1}y$, corresponding to the operator
\[\Delta_p u+{\rm div}\,\Big(\frac{|\nabla u|^{p-2}\nabla u}{1+|\nabla u|^p}\Big).\]
\end{itemize}
Such operators arise in problems of nonlinear elasticity \cite{FO} and plasticity.
\end{example}

\noindent
The other ingredients of \eqref{rp} are subject to the following hypotheses:

\begin{itemize}[leftmargin=1cm]
\item[${\bf H}_\xi$] $\xi\in L^\infty(\Omega)$.
\item[${\bf H}_\beta$] $\beta\in C^{0,\alpha}(\partial\Omega)$ for some $\alpha\in(0,1)$, $\beta(x)\ge 0$ for all $x\in\partial\Omega$.
\end{itemize}

\noindent
We note that the potential $\xi$ may change sign, and that for $\beta=0$ we recover the Neumann problem. Finally we introduce our hypotheses on the reactions, starting with $g$:

\begin{itemize}[leftmargin=1cm]
\item[${\bf H}_g$] $g:\Omega\times\R\to\R$ is a Carath\'eodory function, for all $(x,t)\in\Omega\times\R$ we set
\[G(x,t)=\int_0^t g(x,\tau)\,d\tau.\]
Moreover:
\begin{enumroman}
\item\label{hg1} for all $\rho>0$ there exists $a_\rho\in L^\infty(\Omega)_+$ s.t.\ for a.a.\ $x\in\Omega$, all $|t|\le\rho$
\[|g(x,t)|\le a_\rho(x);\]
\item\label{hg2} $\displaystyle\lim_{|t|\to +\infty}\frac{g(x,t)}{|t|^{p-2}t}=0$ uniformly for a.a.\ $x\in\Omega$;
\item\label{hg3} there exists $q\in (1,r)$ s.t.\ for a.a.\ $x\in\Omega$, all $t\in\R$
\[g(x,t)t\ge c_9|t|^q \ (c_9>0);\]
\item\label{hg4} $\displaystyle\limsup_{t\to 0}\frac{g(x,t)}{|t|^{q-2}t}\le c_{10}$ uniformly for a.a. $x\in\Omega$ ($c_{10}>0$);
\item\label{hg5} there exists $\delta_0>0$ s.t.\ for a.a.\ $x\in\Omega$, all $|t|\le\delta_0$
\[g(x,t)t \le qG(x,t).\]
\end{enumroman}
\end{itemize}

\noindent
We set $\xi_0=(p-1)\xi/c_2$, $\beta_0=(p-1)\beta/c_2$ ($c_2>0$ as in ${\bf H}_a$ \ref{ha2}) and we denote by $\hat\lambda_1=\hat\lambda_1(p,\xi_0,\beta_0)>0$ the first eigenvalue of the auxiliary problem
\beq\label{ep}
\begin{cases}
-\Delta_p u+\xi_0(x)|u|^{p-2}u=\lambda |u|^{p-2}u & \text{in $\Omega$} \\
\displaystyle\frac{\partial u}{\partial n_p}+\beta_0(x)|u|^{p-2}u=0 & \text{on $\partial\Omega$.}
\end{cases}
\eeq
where
\[\frac{\partial u}{\partial n_p}=\langle|\nabla u|^{p-2}\nabla u,n\rangle\]
($n$ being as usual the outward unit normal to $\partial\Omega$). Now we consider the asymmetric term $f$:

\begin{itemize}[leftmargin=1cm]
\item[${\bf H}_f$] $f:\Omega\times\R\to\R$ is a Carath\'eodory function, for all $(x,t)\in\Omega\times\R$ we set
\[F(x,t)=\int_0^t f(x,\tau)\,d\tau.\]
Moreover:
\begin{enumroman}
\item\label{hf1} for all $\rho>0$ there exists $b_\rho\in L^\infty(\Omega)_+$ s.t.\ for a.a.\ $x\in\Omega$, all $|t|\le\rho$
\[|f(x,t)|\le b_\rho(x);\]
\item\label{hf2} $\displaystyle\lim_{t\to +\infty}\frac{f(x,t)}{t^{p^*-1}}=0$ uniformly for a.a.\ $x\in\Omega$;
\item\label{hf3} $\displaystyle\lim_{t\to +\infty}\frac{f(x,t)}{t^{p-1}}=+\infty$ uniformly for a.a.\ $x\in\Omega$;
\item\label{hf4} $f(x,t)\le c_{11}(t^{p^*-1}+t^{r-1})-c_{12}t^{p-1}$ for a.a.\ $x\in\Omega$, all $t\ge 0$ ($c_{11},c_{12}>0$);
\item\label{hf5} uniformly for a.a.\ $x\in\Omega$
\[-c_{13} \le \liminf_{t\to -\infty}\frac{f(x,t)}{|t|^{p-2}t} \le \limsup_{t\to -\infty}\frac{f(x,t)}{|t|^{p-2}t} \le \frac{c_2\hat\lambda_1}{p-1} \ (c_{13}>0);\]
\item\label{hf6} $\displaystyle\lim_{t\to 0}\frac{f(x,t)}{|t|^{p-2}t}=0$ uniformly for a.a.\ $x\in\Omega$;
\item\label{hf7} there exists $\delta_1>0$ s.t.\ for a.a.\ $x\in\Omega$, all $|t|\le\delta_1$
\[f(x,t)t \ge 0.\]
\end{enumroman}
\end{itemize}

\noindent
Finally, we set for all $\lambda>0$ and all $(x,t)\in\Omega\times\R$
\[e_\lambda(x,t)=\lambda g(x,t)t+f(x,t)t-p\big[\lambda G(x,t)+F(x,t)\big]\]
and we assume the following:

\begin{itemize}[leftmargin=1cm]
\item[${\bf H}_e$] for all $\lambda>0$
\begin{enumroman}
\item\label{he1} there exists $\eta_\lambda\in L^1(\Omega)_+$ s.t.\ for a.a. $x\in\Omega$, all $0\le t\le t'$
\[e_\lambda(x,t) \le e_\lambda(x,t')+\eta_\lambda(x);\]
\item\label{he2} $\displaystyle\lim_{t\to -\infty} e_\lambda(x,t)=+\infty$ uniformly for a.a.\ $x\in\Omega$.
\end{enumroman}
\end{itemize}

\noindent
We will write ${\bf H}$ to mean all hypotheses ${\bf H}_a$, ${\bf H}_\xi$, ${\bf H}_\beta$, ${\bf H}_g$, ${\bf H}_f$, and ${\bf H}_e$.
\vskip2pt
\noindent
By ${\bf H}_g$ \ref{hg2} $g(x,\cdot)$ is strictly $(p-1)$-sublinear at $\pm\infty$, so it gives a 'concave' contribution to the reaction of \eqref{rp}. Hypotheses ${\bf H}_f$ \ref{hf3}, \ref{hf5} imply that $f(x,\cdot)$ has an asymmetric behavior at $\pm\infty$. More precisely, ${\bf H}_f$ \ref{hf3} means that $f(x,\cdot)$ is strictly $(p-1)$-superlinear at $+\infty$, so on $\R_+$ it represents a 'convex' contribution to the reaction, leading to a competition phenomenon (concave-convex nonlinearities). We point out that the $(p-1)$-superlinearity of $f(x,\cdot)$ is not coupled with the usual Ambrosetti-Rabinowitz $(AR)$ condition. Instead we use the less restrictive quasimonotonicity condition ${\bf H}_e$ \ref{he1}, which includes in our framework $(p-1)$-superlinear reactions with a slower growth at $+\infty$, that fail to satisfy $(AR)$. Note that ${\bf H}_e$ \ref{he1} holds whenever we can find $\rho>0$ s.t.\ the mapping
\[t\mapsto\frac{\lambda g(x,t)+f(x,t)}{t^{p-1}}\]
is nondecreasing in $[\rho,+\infty)$ for a.a.\ $x\in\Omega$ \cite{LY}. On the negative semiaxis $\R_-$, by ${\bf H}_f$ \ref{hf5} the mapping $f(x,\cdot)$ is asymptotically $(p-1)$-linear at $-\infty$, and in the special case of the $p$-Laplacian (Example \ref{exa} $(a)$ with $c_2=p-1$) resonance with the principal eigenvalue is allowed. Resonance occurs from the left, so by ${\bf H}_e$ \ref{he2} problem \eqref{rp} is coercive on the negative direction, which permits the use of the direct method of the calculus of variations. Finally we remark that by ${\bf H}_f$ \ref{hf2} $f(x,\cdot)$ does not satisfy the usual subcritical growth. Instead we have 'almost-critical' growth, namely for all $\eps>0$ we can find $c_\eps>0$ s.t.\ for a.a.\ $x\in\Omega$, all $t\in\R$
\[|f(x,t)|\le \eps|t|^{p^*-1}+c_\eps.\]
This kind of growth is a source of technical difficulties, since $\w$ is not compactly embedded into $L^{p^*}(\Omega)$. We shall overcome such difficulties by using Vitali's theorem.

\begin{example}\label{exgf}
The following functions (of the type $\lambda g+f$, $\lambda>0$) satisfy hypotheses ${\bf H}_g$, ${\bf H}_f$, and ${\bf H}_e$:
\begin{itemize}[leftmargin=1cm]
\item[$(a)$] $\displaystyle t\mapsto\lambda|t|^{q-2}t+\begin{cases}
\hat\lambda_1|t|^{p-2}t & \text{if $t\le 0$} \\
t^{s-1}+t^{r-1} & \text{if $t>0$}
\end{cases} \ (q<r<p<s<p^*);$
\item[$(b)$] $\displaystyle t\mapsto\lambda|t|^{q-2}t+\begin{cases}
\hat\lambda_1|t|^{p-2}t & \text{if $t\le 0$} \\
t^{p-1}\ln(1+t)-t^{r-1} & \text{if $t>0$}
\end{cases} \ (q<r<p);$
\item[$(c)$] $\displaystyle t\mapsto\lambda|t|^{q-2}t+\begin{cases}
\hat\lambda_1|t|^{p-2}t & \text{if $t\le 0$} \\
\displaystyle\frac{t^{p^*-1}}{\ln(1+t^p)}-\frac{p t^{p^*+p-1}}{p^*(1+t^p)\ln(1+t^p)^2}-t^{r-1} & \text{if $t>0$}
\end{cases} \ (q<r<p);$
\end{itemize}
Note that $(a)$ satisfies $(AR)$ while $(b)$ does not, and that $(c)$ has an almost critical growth at $+\infty$.
\end{example}

\noindent
Our main result is the following:

\begin{theorem}\label{main}
If ${\bf H}$ hold, then there exists $\lambda^*>0$ s.t.\ for all $\lambda\in(0,\lambda^*)$ problem \eqref{rp} admits at least four nontrivial solutions $u_+,v_+,u_-,\tilde u\in C^1(\overline\Omega)$ with $u_+$, $v_+$ positive in $\overline\Omega$, $u_-$ negative in $\overline\Omega$, and $\tilde u$ nodal.
\end{theorem}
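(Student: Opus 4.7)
The plan is to construct the four solutions separately using variational methods tailored to their signs, with the underlying object being the energy functional $\varphi_\lambda\colon\w\to\R$ given by
\[
\varphi_\lambda(u)=\int_\Omega H(\nabla u)\,dx+\frac{1}{p}\int_\Omega\xi(x)|u|^p\,dx+\frac{1}{p}\int_{\partial\Omega}\beta(x)|u|^p\,d\sigma-\int_\Omega\bigl[\lambda G(x,u)+F(x,u)\bigr]\,dx,
\]
whose critical points are precisely the weak solutions of \eqref{rp}, together with two truncated variants $\varphi_\lambda^\pm$ obtained by restricting the reaction to the positive/negative semiaxis. Critical points will be promoted to $C^1(\overline\Omega)$ via the regularity of \cite{L}, and endowed with strict sign via the maximum principle of \cite{PS}. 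For the two positive solutions I would follow the Ambrosetti-Brezis-Cerami mechanism: by ${\bf H}_g$\ref{hg4}, ${\bf H}_f$\ref{hf6}--\ref{hf7} and Lemma~\ref{ah}\ref{ah4}, for $\lambda$ small enough the concave term $\lambda g$ contributes to $\varphi_\lambda^+$ a $-\lambda c|u|^q$ with $q<p$ that dominates the remaining $|u|^p$-type terms near the origin; hence $\varphi_\lambda^+$ is bounded below on a suitable small $\w$-ball and strictly negative at some small positive test function, and a localized minimization yields a positive local minimizer $u_+$ of $\varphi_\lambda^+$ (and hence of $\varphi_\lambda$). The superlinearity ${\bf H}_f$\ref{hf3} then makes $\varphi_\lambda^+$ unbounded below along positive rays, producing Mountain Pass geometry around $u_+$, and the Mountain Pass Theorem delivers a second positive critical point $v_+\neq u_+$.

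For the negative solution, ${\bf H}_f$\ref{hf5} keeps the reaction asymptotically below $(c_2\hat\lambda_1/(p-1))|t|^{p-2}t$ at $-\infty$; combined with the variational characterization of $\hat\lambda_1$ from \eqref{ep} and the coercive contribution ${\bf H}_e$\ref{he2}, this yields coercivity of $\varphi_\lambda^-$, and the direct method together with the same concave-dominance near $0$ produces a nontrivial negative global minimizer $u_-$. The nodal solution requires an extremal argument coupled with Morse theory: I would first extract the smallest positive and largest negative solutions $\underline u_+,\overline u_-$ by a Zorn-type argument on chains of constant sign solutions, exploiting the $(S)_+$-property of $A$. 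Next, truncating the reaction outside $[\overline u_-(x),\underline u_+(x)]$ yields a coercive functional $\hat\varphi_\lambda$ that satisfies (PS) and whose nontrivial critical points different from $\underline u_+,\overline u_-,0$ are, by extremality, necessarily nodal. A Mountain Pass joining $\overline u_-$ and $\underline u_+$ in $\w$ produces a critical point $\tilde u$ with $C_1(\hat\varphi_\lambda,\tilde u)\neq 0$; on the other hand, ${\bf H}_g$\ref{hg5} and ${\bf H}_f$\ref{hf7} make $0$ a strict local minimum of $\hat\varphi_\lambda$, so that $C_k(\hat\varphi_\lambda,0)=\delta_{k,0}\,\mathbb Z$, which forces $\tilde u\neq 0$ and completes the construction.

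The main technical obstacle I anticipate is compactness. By ${\bf H}_f$\ref{hf2} the reaction has only almost-critical growth, so $\w$ is not compactly embedded in $L^{p^*}(\Omega)$ and the usual Ambrosetti-Rabinowitz condition is unavailable for the Mountain Pass step. To verify the Cerami condition for $\varphi_\lambda^+$ I would instead exploit the quasimonotonicity condition ${\bf H}_e$\ref{he1} to obtain boundedness of Cerami sequences in $\w$, and upgrade weak to strong convergence by applying Vitali's convergence theorem to the reaction term combined with the $(S)_+$-property of $A$. Handling the potential resonance with $\hat\lambda_1$ at $-\infty$ in the coercivity step for $\varphi_\lambda^-$, and the precise critical group computations required to distinguish $\tilde u$ from the other critical points, are the remaining delicate issues.
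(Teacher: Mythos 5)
Your overall architecture matches the paper's: truncated functionals $\hat\varphi_\lambda^\pm$ with a $\mu|t|^{p-2}t$ perturbation, the Ambrosetti--Brezis--Cerami mechanism (constrained minimization on a small ball plus mountain pass) for two positive solutions, coercivity and the direct method for the negative one, the Cerami condition via ${\bf H}_e$\ref{he1}, Vitali's theorem and the $(S)_+$-property, and extremal constant-sign solutions feeding a truncated functional whose mountain-pass critical point is distinguished from $0$ by critical groups. However, there is a genuine error in your last step.

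You assert that ${\bf H}_g$\ref{hg5} and ${\bf H}_f$\ref{hf7} make $0$ a \emph{strict local minimum} of the truncated functional, so that $C_k(\cdot,0)=\delta_{k,0}\mathbb Z$. This is inconsistent with the very concave-dominance mechanism you use two paragraphs earlier to produce the local minimizer: by ${\bf H}_g$\ref{hg3} one has $G(x,t)\ge (c_9/q)|t|^q$ with $q<r\le p$, and combined with ${\bf H}_a$\ref{ha4}, ${\bf H}_f}\ref{hf6} this yields $\tilde\varphi_\lambda(tu)<0$ for every $u\in D_+$ and $t>0$ small (indeed $tu$ can be kept inside $[0,\underline u_+]$, where $\tilde\varphi_\lambda$ and $\hat\varphi_\lambda^+$ coincide). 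So $0$ is \emph{not} a local minimizer. The correct statement, which the paper obtains from the inequality $c_{55}[\lambda G+F]-[\lambda gt+ft]\ge \lambda\frac{(c_{55}-q)c_9}{q}|t|^q-c_{56}|t|^p>0$ for small $|t|$ via \cite[Proposition 6]{PR7}, is that $C_k(\tilde\varphi_\lambda,0)=0$ for all $k\in\N$. Your final conclusion $\tilde u\neq 0$ happens to survive either way (both computations give $C_1(\cdot,0)=0$, to be contrasted with $C_1(\cdot,\tilde u)\neq 0$), but the justification you give is false and must be replaced.

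A secondary but real gap is in the extraction of the extremal solutions $\underline u_+,\overline u_-$. A ``Zorn-type argument on chains'' using $(S)_+$ will produce a $\w$-limit of a decreasing sequence in $S_+(\lambda)$, but nothing in your outline prevents this limit from being the trivial solution. The paper closes this gap by introducing the auxiliary problem \eqref{auxp}, proving (via the D\'{\i}az--Saa convexity trick with $H_0(t^{1/r})$, enabled by ${\bf H}_a$\ref{ha4}) that it has a unique positive solution $u_*\in D_+$, and then showing $u\ge u_*$ for every $u\in S_+(\lambda)$ using the comparison inequality \eqref{auxg}. This uniform lower bound is what guarantees $\underline u_+=\inf S_+(\lambda)$ is itself a nontrivial positive solution. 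Without this ingredient the extremality step is incomplete and the ``by extremality, necessarily nodal'' conclusion for $\tilde u$ is unsupported.
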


\subsection{Notation}

\noindent
We establish some notation: we set $\R_+=[0,+\infty)$, $\R_-=(-\infty,0]$; $c_0, c_1, \ldots$ denote positive constants; for all $t\in\R$ we set
\[t^\pm=\max\{0,\pm t\}.\]
In any Banach space $X$, $\rightharpoonup$ denotes weak convergence and $\to$ strong convergence; if $X$ is a function space on the domain $D$, then we denote the positive order cone by
\[X_+=\{u\in X:\,u(x)\ge 0 \ \text{for a.a. $x\in D$}\}.\]
We will say that a functional $\varphi\in C^1(X)$ satisfies the Cerami condition $(C)$, if any sequence $(u_n)$ s.t.\ $(\varphi(u_n))$ is bounded in $\R$ and $(1+\|u_n\|)\varphi'(u_n)\to 0$ in $X^*$, admits a (strongly) convergent subsequence. We will denote the set of critical points of $\varphi$ by
\[K(\varphi)=\{u\in X:\,\varphi'(u)=0\}.\]
We also recall the basic notion from Morse theory: let $\varphi\in C^1(X)$ and $u\in K(\varphi)$ be an isolated critical point, namely there exists a neighborhood $U\subset X$ of $u$ s.t.\ $K(\varphi)\cap U=\{u\}$, and $\varphi(u)=c$. Then, for all $k\in\N$, the $k$-th critical group of $\varphi$ at $u$ is defined by
\[C_k(\varphi,u)=H_k\big(\{v\in U:\,\varphi(v)\le c\},\,\{v\in U:\,\varphi(v)\le c,\,v\neq u\}\big),\]
where $H_k(\cdot,\cdot)$ denotes the $k$-th singular homology group of a topological pair.
\vskip2pt
\noindent
We shall use the function spaces $(\w,\|\cdot\|)$ and $(C^1(\overline\Omega),\|\cdot\|_{C^1(\overline\Omega)})$, endowed with the usual norms. Brackets $\langle\cdot,\cdot\rangle$ denote both the inner product of $\R^N$ and the duality between $\w^*$ and $\w$, with no possible confusing arising. We shall also use the Lebesgue spaces $(L^\nu(\Omega),\|\cdot\|_\nu)$ for all $\nu\in[1,+\infty]$, and the trace space $(L^p(\partial\Omega),\|\cdot\|_{L^p(\partial\Omega)})$ (any $u\in\w$ will be identified with its trace on $\partial\Omega$). We set
\[D_+=\{u\in C^1(\overline\Omega):\,u(x)>0\,\text{for all $x\in\overline\Omega$}\},\]
noting that $D_+\subseteq{\rm int}\,(C^1(\overline\Omega)_+)$.

\section{Constant sign solutions}\label{sec3}

\noindent
For all $\lambda>0$, $u\in\w$ we set
\[\varphi_\lambda(u) = \int_\Omega H(\nabla u)\,dx+\frac{1}{p}\int_\Omega\xi(x)|u|^p\,dx+\frac{1}{p}\int_{\partial\Omega}\beta(x)|u|^p\,d\sigma-\int_\Omega\big[\lambda G(x,u)+F(x,u)\big]\,dx\]
(the integral on $\partial\Omega$ is computed with respect to the $(N-1)$-dimensional Hasudorff measure). By Lemma \ref{ah} \ref{ah4}, ${\bf H}_\xi$, ${\bf H}_\beta$, ${\bf H}_g$ \ref{hg1} \ref{hg2}, ${\bf H}_f$ \ref{hf1} \ref{hf2} \ref{hf5}, we have $\varphi_\lambda\in C^1(\w)$. Moreover, $\varphi_\lambda$ is the energy functional for problem \eqref{rp}. Indeed, for all $u\in K(\varphi_\lambda)$, $v\in\w$ we have
\beq\label{wrp}
\langle A(u),v\rangle+\int_\Omega\xi(x)|u|^{p-2}uv\,dx+\int_{\partial\Omega}\beta(x)|u|^{p-2}uv\,d\sigma = \int_\Omega[\lambda g(x,u)+f(x,u)]v\,dx,
\eeq
i.e., $u$ is a (weak) solution of \eqref{rp}. Besides, let
\beq\label{mu}
\mu>\max\Big\{1,\frac{p-1}{c_2}\Big\}\|\xi\|_\infty
\eeq
and for all $(x,t)\in\Omega\times\R$ set
\[k_\lambda(x,t)=\lambda g(x,t)+f(x,t)+\mu|t|^{p-2}t,\]
\[k^\pm_\lambda(x,t)=k_\lambda(x,\pm t^\pm),\]
and the primitives
\[K^{(\pm)}_\lambda(x,t)=\int_0^t k^{(\pm)}_\lambda(x,\tau)\,d\tau.\]
Now we define two truncated/perturbed functionals by setting for all $u\in\w$
\[\hat\varphi^\pm_\lambda(u) = \int_\Omega H(\nabla u)\,dx+\frac{1}{p}\int_\Omega(\xi(x)+\mu)|u|^p\,dx+\frac{1}{p}\int_{\partial\Omega}\beta(x)|u|^p\,d\sigma-\int_\Omega K^\pm_\lambda(x,u)\,dx\]
(note that $\xi+\mu$ is positive by \eqref{mu}). We shall study separately the properties of $\hat\varphi^+_\lambda$ and $\hat\varphi^-_\lambda$, which are different by the asymmetry of $f$.

\begin{lemma}\label{cp}
If ${\bf H}$ hold, then for all $\lambda>0$ $\hat\varphi^+_\lambda\in C^1(\w)$ satisfies $(C)$.
\end{lemma}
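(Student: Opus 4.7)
My plan is to verify the Cerami condition in three stages: bound the negative parts $u_n^-$, bound the positive parts $u_n^+$, and upgrade the resulting weak compactness to strong convergence through the $(S)_+$-property of $A$. Fix a Cerami sequence $(u_n)\subset\w$, so $|\hat\varphi^+_\lambda(u_n)|\le M$ and $(1+\|u_n\|)(\hat\varphi^+_\lambda)'(u_n)\to 0$ in $\w^*$. Since ${\bf H}_g$~\ref{hg4} and ${\bf H}_f$~\ref{hf6} force $g(x,0)=f(x,0)=0$, both $k^+_\lambda(x,t)$ and $K^+_\lambda(x,t)$ vanish for $t\le 0$. Testing with $v=-u_n^-$ kills the reaction term, and combining Lemma~\ref{ah}~\ref{ah3} with the inequality $\xi+\mu\ge\mu-\|\xi\|_\infty>0$ from~\eqref{mu} yields $c\|u_n^-\|^p\le o(1)$, so $u_n^-\to 0$ in $\w$.

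The delicate step is bounding $(u_n^+)$. I argue by contradiction: assume $\|u_n^+\|\to\infty$, set $y_n=u_n^+/\|u_n^+\|$ and pass to a subsequence so that $y_n\rightharpoonup y$ in $\w$, strongly in $L^p(\Omega)$ and $L^p(\partial\Omega)$, and a.e., with $y\ge 0$. If $y\not\equiv 0$, then $u_n^+\to+\infty$ on $\{y>0\}$; by ${\bf H}_f$~\ref{hf3} (via L'H\^opital) $F(x,u_n^+)/(u_n^+)^p\to+\infty$, and Fatou's lemma (with $F$ bounded below) produces $\int F(x,u_n^+)/\|u_n^+\|^p\to+\infty$, while Lemma~\ref{ah}~\ref{ah4}, ${\bf H}_g$~\ref{hg2}, and $u_n^-\to 0$ keep the remaining quotients of $\hat\varphi^+_\lambda(u_n)/\|u_n^+\|^p$ bounded, forcing $\hat\varphi^+_\lambda(u_n)/\|u_n^+\|^p\to-\infty$, a contradiction. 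If $y\equiv 0$, I invoke the Jeanjean mountain-pass trick: choose $t_n\in[0,1]$ with $\hat\varphi^+_\lambda(t_n u_n)=\max_{[0,1]}\hat\varphi^+_\lambda(tu_n)$; for each fixed $L>0$, $w_n:=Lu_n/\|u_n\|$ has $\|w_n\|=L$ and $\rightharpoonup 0$, so $w_n\to 0$ strongly in every $L^q$ with $q<p^*$. Using ${\bf H}_g$~\ref{hg2}, the almost-critical estimate $|f(x,t)|\le\varepsilon|t|^{p^*-1}+c_\varepsilon$, and Vitali's theorem, one obtains $\int K^+_\lambda(x,w_n)\to 0$, so Lemma~\ref{ah}~\ref{ah4} delivers $\hat\varphi^+_\lambda(t_n u_n)\ge\hat\varphi^+_\lambda(w_n)\ge\frac{c_2}{p(p-1)}L^p-o(1)$, whence $\hat\varphi^+_\lambda(t_n u_n)\to+\infty$. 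Boundedness of $\hat\varphi^+_\lambda(u_n)$ then forces $t_n\in(0,1)$ eventually, so $\langle(\hat\varphi^+_\lambda)'(t_n u_n),t_n u_n\rangle=0$; substituting $v=t_n u_n$ into the identity
\[p\hat\varphi^+_\lambda(v)-\langle(\hat\varphi^+_\lambda)'(v),v\rangle=\int_\Omega\bigl[pH(\nabla v)-\langle a(\nabla v),\nabla v\rangle\bigr]\,dx+\int_\Omega e_\lambda(x,v^+)\,dx\]
and exploiting ${\bf H}_a$~\ref{ha4} together with the quasimonotonicity ${\bf H}_e$~\ref{he1} (which, since $0\le t_n u_n^+\le u_n^+$, gives $e_\lambda(x,t_n u_n^+)\le e_\lambda(x,u_n^+)+\eta_\lambda(x)$), one finds the left-hand side diverges while the right-hand side is controlled via the same identity at $v=u_n$, a contradiction.

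Once $(u_n)$ is bounded I extract $u_n\rightharpoonup u$ in $\w$ and test $(\hat\varphi^+_\lambda)'(u_n)\to 0$ against $u_n-u$: the potential and boundary integrals vanish by the compact embeddings $\w\hookrightarrow L^p(\Omega)$ and $\w\hookrightarrow L^p(\partial\Omega)$, while $\int k^+_\lambda(x,u_n)(u_n-u)\,dx\to 0$ by treating the sublinear $g$-part through compactness and the almost-critical $f$-part through the same $\varepsilon$--Vitali estimate. This yields $\limsup_n\langle A(u_n),u_n-u\rangle\le 0$, and the $(S)_+$-property recalled after Lemma~\ref{ah} then gives $u_n\to u$ in $\w$. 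The main obstacle is the $y\equiv 0$ branch of the boundedness step: without an Ambrosetti--Rabinowitz condition one must exploit ${\bf H}_e$~\ref{he1} via Jeanjean's device, and the almost-critical growth of $f$ (which precludes compactness of $\w\hookrightarrow L^{p^*}(\Omega)$) forces Vitali-type control of every critical-exponent term.
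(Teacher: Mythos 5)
Your overall architecture matches the paper's: kill $u_n^-$, bound $u_n^+$ by contradiction with a blow-up dichotomy, then use $(S)_+$. The $y\not\equiv 0$ branch and the final Vitali/$(S)_+$ passage are sound. The gap is in the $y\equiv 0$ branch, precisely where you apply the Jeanjean device to $\hat\varphi^+_\lambda$ itself. After locating $t_n\in(0,1)$ with $\langle(\hat\varphi^+_\lambda)'(t_nu_n),t_nu_n\rangle=0$, your identity gives
\[p\hat\varphi^+_\lambda(t_nu_n)=\int_\Omega\bigl[pH(\nabla t_nu_n)-\langle a(\nabla t_nu_n),\nabla t_nu_n\rangle\bigr]\,dx+\int_\Omega e_\lambda(x,(t_nu_n)^+)\,dx,\]
and you must bound the right side from \emph{above}. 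The $e_\lambda$ integral is indeed controlled through ${\bf H}_e$~\ref{he1} and the a priori bound on $\int e_\lambda(x,u_n^+)$. But the first integral is not: ${\bf H}_a$~\ref{ha4} only provides the one-sided bound $pH_0(t)-a_0(t)t^2\ge-c_6$, and there is no hypothesis forcing $t\mapsto pH_0(t)-a_0(t)t^2$ to be nondecreasing, so you cannot dominate the integrand at $t_n\nabla u_n$ by its value at $\nabla u_n$. Already for the $(p,q)$-Laplacian one has $pH(y)-\langle a(y),y\rangle=(\tfrac{p}{q}-1)|y|^q$, so this term can grow like $t_n^q\|\nabla u_n\|_q^q$ and is not controllable in this way.

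The paper evades this by introducing the $p$-homogeneous minorant
\[\hat\psi^+_\lambda(u)=\frac{c_2}{p(p-1)}\|\nabla u\|_p^p+\frac{1}{p}\int_\Omega(\xi+\mu)|u|^p\,dx+\frac{1}{p}\int_{\partial\Omega}\beta|u|^p\,d\sigma-\int_\Omega K^+_\lambda(x,u)\,dx\le\hat\varphi^+_\lambda(u),\]
which satisfies the \emph{clean} identity $p\hat\psi^+_\lambda(v)-\langle(\hat\psi^+_\lambda)'(v),v\rangle=\int_\Omega e_\lambda(x,v^+)\,dx$ with no residual gradient term. One then maximizes $\hat\psi^+_\lambda(tu_n^+)$ over $[0,1]$, pushes the value to $+\infty$ exactly as you do via the rescaled $\hat w_n$ and Vitali, uses $\hat\psi^+_\lambda\le\hat\varphi^+_\lambda$ to see $\hat\psi^+_\lambda(u_n^+)$ is bounded above, deduces $t_n\in(0,1)$, and contradicts $p\hat\psi^+_\lambda(t_nu_n^+)=\int_\Omega e_\lambda(x,t_nu_n^+)\,dx\le c$ from ${\bf H}_e$~\ref{he1}. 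To repair your argument, replace $\hat\varphi^+_\lambda$ by this auxiliary $\hat\psi^+_\lambda$ throughout the $y\equiv 0$ case.
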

\begin{proof}
Clearly $k^+_\lambda:\Omega\times\R\to\R$ is a Carath\'eodory function, with a growth defined by ${\bf H}_g$ \ref{hg2}, ${\bf H}_f$ \ref{hf2} \ref{hf5}, so $\hat\varphi^+_\lambda\in C^1(\w)$.
\vskip2pt
\noindent
Let $(u_n)$ be a sequence in $\w$ s.t.\ $|\hat\varphi^+_\lambda(u_n)|\le c_{14}$ for all $n\in\N$ ($c_{14}>0$) and $(1+\|u_n\|)(\hat\varphi^+_\lambda)'(u_n)\to 0$ in $\w^*$. We can find a sequence $(\eps_n)$ in $\R$ s.t.\ $\eps_n\to 0^+$ and for all $n\in\N$, $v\in\w$
\begin{align}\label{cp1}
&\Big|\langle A(u_n),v\rangle+\int_\Omega(\xi(x)+\mu)|u_n|^{p-2}u_n v\,dx+\int_{\partial\Omega}\beta(x)|u_n|^{p-2}u_n v\,d\sigma-\int_\Omega k^+_\lambda(x,u_n)v\,dx\Big| \\
\nonumber&\le\frac{\eps_n\|v\|}{1+\|u_n\|}.
\end{align}
Choosing $v=-u_n^-$ in \eqref{cp1} and using Lemma \ref{ah} \ref{ah3} we get for all $n\in\N$
\[\frac{c_2}{p-1}\|\nabla u_n^-\|_p^p+\int_\Omega(\xi(x)+\mu)(u_n^-)^p\,dx \le \eps_n.\]
Passing to the limit we see that $u_n^-\to 0$ in $\w$. Now we deal with $u_n^+$. By definition of $K^+_\lambda$ we have for all $n\in\N$
\begin{align*}
&pc_{14} \ge p\int_\Omega H(\nabla u_n)\,dx+\int_\Omega(\xi(x)+\mu)|u_n|^p\,dx+\int_{\partial\Omega}\beta(x)|u_n|^p\,d\sigma-p\int_\Omega K^+_\lambda(x,u_n)\,dx \\
&\ge p\int_\Omega H(\nabla u^+_n)\,dx+\int_\Omega\xi(x)(u_n^+)^p\,dx+\int_{\partial\Omega}\beta(x)(u_n^+)^p\,d\sigma-p\int_\Omega\big[\lambda G(x,u^+_n)+F(x,u^+_n)\big]\,dx,
\end{align*}
while \eqref{cp1} with $v=-u_n^+$ yields
\[-\langle A(u_n^+),u_n^+\rangle-\int_\Omega\xi(x)(u_n^+)^p\,dx-\int_{\partial\Omega}\beta(x)(u_n^+)^p\,d\sigma+\int_\Omega[\lambda g(x,u_n^+)+f(x,u_n^+)]u_n^+\,dx\le\eps_n.\]
Adding up we get
\[\int_\Omega\big[pH(\nabla u_n^+)-\langle a(\nabla u_n^+),\nabla u_n^+\rangle\big]\,dx+\int_\Omega e_\lambda(x,u_n^+)\,dx\le c_{15} \ (c_{15}>0),\]
which by ${\bf H}_a$ \ref{ha4} implies
\beq\label{cp3}
\int_\Omega e_\lambda(x,u_n^+)\,dx\le c_{16} \ (c_{16}>0).
\eeq
We claim that $(u_n^+)$ is bounded in $\w$. Arguing by contradiction, we may assume that (passing if necessary to a subsequence) $\|u_n^+\|\to +\infty$. Then we set for all $n\in\N$ $w_n=u_n^+\|u_n^+\|^{-1}$, so $w_n\in\w$ with $\|w_n\|=1$. Passing to a subsequence we have $w_n\rightharpoonup w$ in $\w$ and $w_n\to w$ both in $L^p(\Omega)$ and in $L^p(\partial\Omega)$ (due to the compact embeddings $\w\hookrightarrow L^p(\Omega),\,L^p(\partial\Omega)$). Clearly $w\in\w_+$. Two cases may occur:
\begin{itemize}[leftmargin=1cm]
\item[$(a)$] First we assume $w\neq 0$. Let
\[\Omega_+=\{x\in\Omega:\,w(x)>0\},\]
then $|\Omega_+|>0$ and for a.a.\ $x\in\Omega_+$ we have $u_n^+(x)\to +\infty$. By ${\bf H}_f$ \ref{hf3} we have for a.a.\ $x\in\Omega_+$
\[\frac{F(x,u_n^+(x))}{\|u_n^+\|^p} = \frac{F(x,u_n^+(x))}{u_n^+(x)^p}w_n(x)^p\to +\infty\]
By Fatou's lemma we have
\[\lim_n\int_{\Omega_+}\frac{F(x,u_n^+)}{\|u_n^+\|^p}\,dx= +\infty.\]
By ${\bf H}_f$ \ref{hf1} \ref{hf3} we have for a.a.\ $x\in\Omega$, all $t\ge 0$
\[F(x,t)\ge t^p-c_{17} \ (c_{17}>0),\]
so we have
\[\int_{\Omega\setminus\Omega_+}\frac{F(x,u_n^+)}{\|u_n^+\|^p}\,dx \ge \int_{\Omega\setminus\Omega_+}w_n^p\,dx-\frac{c_{17}|\Omega|}{\|u_n^+\|^p},\]
and the latter is bounded from below. Summarizing,
\beq\label{cp4}
\lim_n\int_{\Omega}\frac{F(x,u_n^+)}{\|u_n^+\|^p}\,dx= +\infty.
\eeq
Besides, ${\bf H}_g$ \ref{hg2} implies, as above,
\[\lim_n\int_{\Omega_+}\frac{G(x,u_n^+)}{\|u_n^+\|^p}\,dx= 0.\]
By ${\bf H}_g$ \ref{hg1} \ref{hg2}, for any $\eps>0$ we can find $c_{18}=c_{18}(\eps)>0$ s.t.\ for a.a.\ $x\in\Omega$, all $t\ge 0$
\[G(x,t)\le\frac{\eps}{p}t^p+c_{18}.\]
So we have
\[\limsup_n\int_{\Omega\setminus\Omega_+}\frac{G(x,u_n^+)}{\|u_n^+\|^p}\,dx \le \limsup_n\int_{\Omega\setminus\Omega_+}\Big(\frac{\eps}{p}w_n^p+\frac{c_{18}}{\|u_n^+\|^p}\Big)\,dx \le \frac{\eps}{p}\|w\|_p^p.\]
since $\eps>0$ is arbitrary, adding the two integrals we get
\beq\label{cp5}
\lim_n\int_{\Omega}\frac{G(x,u_n^+)}{\|u_n^+\|^p}\,dx=0.
\eeq
Now \eqref{cp4}, \eqref{cp5} imply
\[\lim_n\int_{\Omega}\frac{\lambda G(x,u_n^+)+F(x,u_n^+)}{\|u_n^+\|^p}\,dx= +\infty.\]
But again from boundedness of $(\hat\varphi^+_\lambda(u_n))$, and recalling that $u^-_n\to 0$ in $\w$, we have for all $n\in\N$
\[\int_\Omega H(\nabla u^+_n)\,dx+\frac{1}{p}\int_\Omega\xi(x)(u_n^+)^p\,dx+\frac{1}{p}\int_{\partial\Omega}\beta(x)(u^+_n)^p\,d\sigma-\int_\Omega\big[\lambda G(x,u^+_n)+F(x,u_n^+)\big]\,dx \ge -c_{19} \ (c_{19}>0),\]
which, along with Lemma \ref{ah} \ref{ah4}, implies
\begin{align*}
&\int_{\Omega}\frac{\lambda G(x,u_n^+)+F(x,u_n^+)}{\|u_n^+\|^p}\,dx \le \frac{c_{19}}{\|u^+_n\|^p}+\int_\Omega\frac{c_8(1+|\nabla u^+_n|^p)}{\|u^+_n\|^p}\,dx+\frac{1}{p}\int_\Omega\xi(x)w_n^p\,dx+\frac{1}{p}\int_{\partial\Omega}\beta(x)w_n^p\,d\sigma \\
&\le c_{20}(1+\|w_n\|^p) \ (c_{20}>0),
\end{align*}
and the latter is bounded from above. Thus we reach a contradiction.
\item[$(b)$] Now we assume $w=0$. Fix $M>0$ and set $\hat w_n=(Mp)^\frac{1}{p}w_n$ for all $n\in\N$, so $\hat w_n\rightharpoonup 0$ in $\w$ and $\hat w_n\to 0$ in $L^p(\Omega)$ and $L^p(\partial\Omega)$. By ${\bf H}_g$ \ref{hg1} \ref{hg2} we have for a.a.\ $x\in\Omega$, all $t\in\R$
\[|G(x,t)|\le c_{21}(1+|t|^p) \ (c_{21}>0).\]
So we get
\beq\label{cp6}
\lim_n\int_\Omega G(x,\hat w_n)\,dx=0.
\eeq
Clearly $(\hat w_n)$ is bounded in $L^{p^*}(\Omega)$, so set
\[K_0=\sup_{n\in\N}\|\hat w_n\|_{p^*}^{p^*}.\]
By ${\bf H}_f$ \ref{hf1} \ref{hf2}, for any $\eps>0$ we can find $c_{21}=c_{22}(\eps)>0$ s.t.\ for a.a.\ $x\in\Omega$, all $t\in\R$
\[|F(x,t)|\le\frac{\eps}{2K_0}|t|^{p^*}+c_{22}.\]
So, the sequence $(F(\cdot,\hat w_n))$ is bounded in $L^1(\Omega)$. Furthermore, for any measurable set $B\subset\Omega$ with $|B|\le\eps(2c_{22})^{-1}$ we have for all $n\in\N$
\[\int_B|F(x,\hat w_n)|\,dx \le \frac{\eps}{2K_0}\|\hat w_n\|_{p^*}^{p^*}+c_{22}|B| \le \eps.\]
So the sequence $(F(\cdot,\hat w_n))$ is uniformly integrable in $\Omega$ (see \cite[Problem 1.6]{GP1}). Passing to a subsequence, we have $F(x,\hat w_n(x))\to 0$ as $n\to\infty$, for a.a.\ $x\in\Omega$. By Vitali's theorem \cite[p.\ 5]{GP1} we have
\beq\label{cp7}
\lim_n\int_\Omega F(x,\hat w_n)\,dx=0.
\eeq
Since $\|u_n^+\|\to +\infty$, for $n\in\N$ big enough we have
\beq\label{cp8}
0<\frac{(Mp)^\frac{1}{p}}{\|u_n^+\|}\le 1.
\eeq
Let $\hat\psi_\lambda^+\in C^1(\w)$ be defined for all $u\in\w$ by
\[\hat\psi_\lambda^+(u)= \frac{c_2}{p(p-1)}\|\nabla u\|_p^p+\frac{1}{p}\int_\Omega(\xi(x)+\mu)|u|^p\,dx+\frac{1}{p}\int_{\partial\Omega}\beta(x)|u|^p\,d\sigma-\int_\Omega K^+_\lambda(x,u)\,dx.\]
For all $n\in\N$ there exists $t_n\in[0,1]$ s.t.
\[\hat\psi^+_\lambda(t_nu_n^+)=\max_{t\in[0,1]}\hat\psi^+_\lambda(tu_n^+).\]
In particular, by \eqref{cp8} we have for $n\in\N$ big enough
\begin{align*}
&\hat\psi^+_\lambda(t_nu_n^+) \ge \hat\psi^+_\lambda(\hat w_n) \\
&\ge \frac{c_2M}{p-1}\|\nabla w_n\|_p^p+M\int_\Omega(\xi(x)+\mu)w_n^p\,dx-\int_\Omega\big[\lambda G(x,\hat w_n)+F(x,\hat w_n)+M\mu w_n^p\big]\,dx \\
&\ge M(c_{23}-\mu\|w_n\|_p^p)-\int_\Omega\big[\lambda G(x,\hat w_n)+F(x,\hat w_n)\big]\,dx \ (c_{23}>0)
\end{align*}
(recall that $\mu>\|\xi	\|_\infty$ and $\|w_n\|=1$). Now by \eqref{cp6}, \eqref{cp7} we have for $n\in\N$ even bigger
\[\hat\psi^+_\lambda(t_nu_n^+) \ge Mc_{24} \ (c_{24}>0)\]
which by arbitrarity of $M>0$ implies $\hat\psi^+_\lambda(t_nu_n^+)\to +\infty$ as $n\to\infty$. By Lemma \ref{ah} \ref{ah4} we have $\hat\varphi^+_\lambda(u)\ge\hat\psi^+_\lambda(u)$ for all $u\in\w$, hence the sequence $(\hat\psi^+_\lambda(u_n^+))$ is bounded from above. Besides, clearly $\hat\psi^+_\lambda(0)=0$. So, for all $n\in\N$ big enough we must have $t_n\in(0,1)$. By definition of $t_n$, then,
\[\frac{d}{dt}\restr{\hat\psi^+_\lambda(tu^+_n)}{t=t_n} = \langle(\hat\psi^+_\lambda)'(t_nu_n^+),u_n^+\rangle=0.\]
Multiplying by $t_n$ we get
\[\frac{c_2}{p-1}\|\nabla(t_nu_n^+)\|_p^p+\int_\Omega\xi(x)(t_nu_n^+)^p\,dx+\int_{\partial\Omega}\beta(x)(t_nu_n^+)^p\,d\sigma=\int_\Omega[\lambda g(x,t_nu_n^+)+f(x,t_nu_n^+)]t_nu_n^+\,dx.\]
By ${\bf H}_e$ \ref{he1}, $t_n<1$, and \eqref{cp3} we have
\[\int_\Omega e_\lambda(x,t_nu_n^+)\,dx \le \int_\Omega e_\lambda(x,u_n^+)\,dx+\|\eta_\lambda\|_1<c_{25} \ (c_{25}>0),\]
which implies
\[\int_\Omega[\lambda g(x,t_nu_n^+)+f(x,t_nu_n^+)]t_nu_n^+\,dx \le p\int_\Omega\big[\lambda G(x,t_nu_n^+)+F(x,t_nu_n^+)\big]\,dx+c_{25}.\]
Thus, for all $n\in\N$ big enough we have
\[p\hat\psi^+_\lambda(t_nu^+_n)=\int_\Omega[\lambda g(x,t_nu_n^+)+f(x,t_nu_n^+)]t_nu_n^+\,dx - p\int_\Omega\big[\lambda G(x,t_nu_n^+)+F(x,t_nu_n^+)\big]\,dx\le c_{25},\]
a contradiction.
\end{itemize}
By the claim above and $u^-_n\to 0$, we see that $(u_n)$ is bounded in $\w$. Passing to a subsequence, we may assume $u_n\rightharpoonup u$ in $\w$ and $u_n\to u$ in $L^p(\Omega)$ and $L^p(\partial\Omega)$. By ${\bf H}_g$ \ref{hg1} \ref{hg2} we have for a.a.\ $x\in\Omega$, all $t\in\R$
\[|g(x,t)|\le c_{26}(1+|t|^{p-1}) \ (c_{26}>0),\]
hence by H\"older's inequality
\beq\label{cp9}
\lim_n\int_\Omega g(x,u_n^+)(u_n-u)\,dx = 0.
\eeq
Besides, $(u_n)$ is bounded in $L^{p^*}(\Omega)$, so we set
\[K_1=\sup_{n\in\N}\|u_n\|_{p^*}+\|u\|_{p^*}.\]
By ${\bf H}_f$ \ref{hf1} \ref{hf2}, for any $\eps>0$ we can find $c_{27}=c_{27}(\eps)>0$ s.t.\ for a.a.\ $x\in\Omega$, all $t\ge 0$
\[|f(x,t)|\le\frac{\eps}{3K_1^{p^*}}t^{p^*-1}+c_{27}.\]
Passing if necessary to a subsequence, we have $f(x,u_n^+(x))(u_n(x)-u(x))\to 0$ as $n\to\infty$, for a.a.\ $x\in\Omega$. Moreover, for any measurable $B\subset\Omega$ with
\[|B|\le\Big(\frac{\eps}{6K_1c_{27}}\Big)^{(p^*)'}\]
we have by H\"older's inequality
\begin{align*}
&\Big|\int_B f(x,u_n^+)(u_n-u)\,dx\Big| \le \frac{\eps}{3K_1^{p^*}}\int_B(u_n^+)^{p^*-1}|u_n-u|\,dx+c_{27}\|u_n-u\|_1 \\
&\le \frac{\eps}{3K_1^{p^*}}\|u_n^+\|_{p^*}^{p^*-1}\|u_n-u\|_{p^*}+c_{27}|B|^\frac{1}{(p^*)'}\|u_n-u\|_{p^*} \\
&\le \frac{2\eps}{3}+\frac{\eps}{3} = \eps.
\end{align*}
So, the sequence  $(f(\cdot,u_n^+)(u_n-u))$ is uniformly integrable in $\Omega$. By Vitali's theorem we get
\beq\label{cp10}
\lim_n\int_\Omega f(x,u_n^+)(u_n-u)\,dx = 0.
\eeq
If we choose $v=u_n-u$ in \eqref{cp1}, pass to the limit as $n\to\infty$, and use \eqref{cp9} and \eqref{cp10}, we now get
\[\limsup_n\langle A(u_n),u_n-u\rangle =0.\]
By the $(S)_+$-property of $A$ we have $u_n\to u$ in $\w$, which concludes the proof.
\end{proof}

\noindent
The following lemmas show that for $\lambda>0$ small enough $\hat\varphi^+_\lambda$ exhibits the 'mountain pass' geometry:

\begin{lemma}\label{mpp}
If ${\bf H}$ hold, then there exists $\lambda^*>0$ s.t.\ for all $\lambda\in(0,\lambda^*)$ there exists $\rho_\lambda>0$ s.t.
\[\inf_{\|u\|=\rho_\lambda}\hat\varphi^+_\lambda(u)=\hat m^+_\lambda >0.\]
\end{lemma}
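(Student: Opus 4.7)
The plan is to bound $\hat\varphi^+_\lambda$ from below by a coercive term of order $\|u\|^p$, plus a subleading concave correction of order $\lambda\|u\|^q$ and an almost-critical correction of order $\|u\|^{p^*}$, and then to choose $\rho_\lambda>0$ so that this lower bound is strictly positive on $\{\|u\|=\rho_\lambda\}$. To start, combining ${\bf H}_g$ \ref{hg1}--\ref{hg4} (in particular the near-zero bound \ref{hg4} and the sublinearity at $\pm\infty$ \ref{hg2}) yields, for every $\eps>0$, a constant $C_\eps>0$ with $G(x,t)\le C_\eps|t|^q+\eps|t|^p$ for a.a.\ $x\in\Omega$ and all $t\in\R$; from ${\bf H}_f$ \ref{hf1}\ref{hf2}\ref{hf6} we analogously get $F(x,t)\le\eps|t|^p+c_\eps|t|^{p^*}$ for all $t\ge 0$.

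Next, expand $\hat\varphi^+_\lambda$: since $k^+_\lambda(x,\cdot)$ vanishes on $\R_-$ and equals $\lambda g+f+\mu|\cdot|^{p-2}\cdot$ on $\R_+$, the $\mu|u|^p$-contributions cancel on $\{u>0\}$, leaving the non-negative term $\frac{\mu}{p}\|u^-\|_p^p$. Using Lemma \ref{ah}\ref{ah4} together with $\xi=c_2\xi_0/(p-1)$ and $\beta=c_2\beta_0/(p-1)$, we obtain
\[
\hat\varphi^+_\lambda(u) \ge \frac{c_2}{p(p-1)}\Sigma(u)+\frac{\mu}{p}\|u^-\|_p^p-\lambda\int_\Omega G(x,u^+)\,dx-\int_\Omega F(x,u^+)\,dx,
\]
where $\Sigma(u)=\|\nabla u\|_p^p+\int_\Omega\xi_0|u|^p\,dx+\int_{\partial\Omega}\beta_0|u|^p\,d\sigma$. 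The variational characterization of $\hat\lambda_1>0$ gives $\Sigma(u)\ge\hat\lambda_1\|u\|_p^p$, while $\beta_0\ge 0$ and $\xi_0\in L^\infty(\Omega)$ yield $\Sigma(u)\ge\|\nabla u\|_p^p-\|\xi_0\|_\infty\|u\|_p^p$; a suitable convex combination of these two then produces $\Sigma(u)\ge c_0\|u\|^p$ for some $c_0>0$. Plugging the growth bounds from the first paragraph into the reaction integrals and using the continuous embeddings $\w\hookrightarrow L^\nu(\Omega)$ for $\nu\in[1,p^*]$, we arrive at
\[
\hat\varphi^+_\lambda(u)\ge(C_1-\lambda C_2-\eps C_3)\|u\|^p-\lambda C_4\|u\|^q-c_\eps C_5\|u\|^{p^*}.
\]

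Fixing $\eps$ so small that $\eps C_3\le C_1/4$ and restricting $\lambda$ so that $\lambda C_2\le C_1/4$, we factor out $\rho^q$ with $\rho=\|u\|$ to obtain $\hat\varphi^+_\lambda(u)\ge\rho^q\bigl[(C_1/2)\rho^{p-q}-\lambda C_4-c_\eps C_5\rho^{p^*-q}\bigr]$. Since $q<p<p^*$, the scalar map $\rho\mapsto(C_1/2)\rho^{p-q}-c_\eps C_5\rho^{p^*-q}$ attains a positive maximum $\kappa>0$ at some $\rho_*>0$ independent of $\lambda$; choosing $\lambda^*<\kappa/C_4$, for every $\lambda\in(0,\lambda^*)$ we have $\hat\varphi^+_\lambda(u)\ge\rho_*^q(\kappa-\lambda C_4)>0$ whenever $\|u\|=\rho_*$, so the choice $\rho_\lambda=\rho_*$ does the job. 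The main obstacle is reconciling three competing scales at once: the concave $\lambda|t|^q$ near zero (pushing $\rho_\lambda$ away from $0$), the indefinite potential $\xi$ (which must be absorbed via the first eigenvalue $\hat\lambda_1$ to recover coercivity), and the almost-critical $|t|^{p^*}$ growth at $+\infty$ (forcing $\rho_\lambda$ to stay bounded); only the joint optimization in $\rho$ makes them compatible for $\lambda$ small.
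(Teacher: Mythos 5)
Your proof is correct, but it takes a genuinely different route from the paper's in two respects. First, to control the reaction term the paper invokes the one-sided bound ${\bf H}_f$\,\ref{hf4}, $f(x,t)\le c_{11}(t^{p^*-1}+t^{r-1})-c_{12}t^{p-1}$ for $t\ge 0$, and then inflates the $t^q$ and $t^{p^*}$ coefficients so as to \emph{strengthen} the negative $t^p$ coefficient until it dominates $\|\xi\|_\infty$; the indefinite potential is absorbed directly without any reference to $\hat\lambda_1$. You instead work only with the sign-free pieces of ${\bf H}_f$ (\ref{hf1}, \ref{hf2}, \ref{hf6}), which yield a nonnegative bound $F(x,t)\le\eps|t|^p+c_\eps|t|^{p^*}$, and then recover coercivity of the quadratic part from the strict positivity of the first eigenvalue $\hat\lambda_1$ via the interpolation $\Sigma(u)\ge c_0\|u\|^p$. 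Both routes are legitimate under the standing hypotheses (the paper does posit $\hat\lambda_1>0$), but the paper's is independent of $\hat\lambda_1$ for this lemma, while yours needs it; conversely, yours uses a weaker structural input on $f$ and so is a bit more robust in that direction. Second, you factor out $\rho^q$ and obtain a $\lambda$-independent radius $\rho_\lambda\equiv\rho_*$, imposing smallness of $\lambda$ through $\lambda<\kappa/C_4$ (and $\lambda C_2\le C_1/4$), whereas the paper factors out $\rho^p$, minimizes $j_\lambda(\rho)=\lambda c_{33}\rho^{q-p}+c_{34}\rho^{p^*-p}$, and obtains a $\lambda$-dependent radius $\rho_\lambda\sim\lambda^{1/(p^*-q)}$ that shrinks to $0$; since the lemma only asserts existence of some $\rho_\lambda>0$, both parametrizations are fine, and your fixed-radius choice is marginally simpler. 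One small bookkeeping point to tighten: you impose two independent smallness constraints on $\lambda$ (for $\lambda C_2\le C_1/4$ and for $\lambda C_4<\kappa$), so $\lambda^*$ should be taken as the minimum of the two resulting thresholds.
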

\begin{proof}
By ${\bf H}_g$ \ref{hg2} \ref{hg4}, for all $\eps>0$ we can find $c_{28}=c_{28}(\eps)>0$ s.t.\ for a.a.\ $x\in\Omega$, all $t\ge 0$
\[G(x,t)\le \frac{\eps}{p}t^p+c_{28}t^q\]
(recall that $q<p$). By ${\bf H}_f$ \ref{hf4} we have as well for a.a.\ $x\in\Omega$, all $t\ge 0$
\[F(x,t)\le \frac{c_{11}}{p^*}t^{p^*}+\frac{c_{11}}{r}t^r-\frac{c_{12}}{p}t^p.\]
Recalling that $q<r<p<p^*$ and choosing $\eps<c_{12}/\lambda$ we get for a.a.\ $x\in\Omega$, all $t\ge 0$
\[\lambda G(x,t)+F(x,t)\le\lambda c_{28}t^q+c_{29}t^{p^*}-\frac{c_{30}}{p}t^p,\]
where, taking $c_{28},c_{29}>0$ big enough, we may assume $c_{30}>\|\xi\|_\infty$. So, recalling ${\bf H}_\beta$ and $\mu>\|\xi\|_\infty$, for all $u\in\w$ we have
\begin{align*}
&\hat\varphi^+_\lambda(u) \ge \frac{c_2}{p(p-1)}\|\nabla u^-\|_p^p+\frac{1}{p}\int_\Omega(\xi(x)+\mu)(u^-)^p\,dx \\
&+\frac{c_2}{p(p-1)}\|\nabla u^+\|_p^p+\frac{1}{p}\int_\Omega\xi(x)(u^+)^p\,dx-\int_\Omega\big[\lambda G(x,u^+)+F(x,u^+)\big]\,dx \\
&\ge c_{31}\|u^-\|^p+\frac{c_2}{p(p-1)}\|\nabla u^+\|_p^p+\frac{1}{p}\int_\Omega(\xi(x)+c_{30})(u^+)^p\,dx-\lambda c_{28}\|u^+\|_q^q-c_{29}\|u^+\|_{p^*}^{p^*} \\
&\ge c_{31}\|u^-\|^p+c_{32}\|u^+\|^p-\lambda c_{33}\|u\|^q-c_{34}\|u\|^{p^*} \\
&\ge c_{35}\|u\|^p-\lambda c_{33}\|u\|^q-c_{34}\|u\|^{p^*} \ (c_{31},\ldots c_{35}>0).
\end{align*}
Summarizing, we have
\beq\label{mpp1}
\hat\varphi^+_\lambda(u) \ge (c_{35}-j_\lambda(\|u\|))\|u\|^p,
\eeq
where we have set for all $t
\rho>0$
\[j_\lambda(\rho)=\lambda c_{33}\rho^{q-p}+c_{34}\rho^{p^*-p}.\]
Since $q<p<p^*$, we have for all $\lambda>0$
\[\lim_{\rho\to 0^+}j_\lambda(\rho)=\lim_{\rho\to +\infty} j_\lambda(\rho)=+\infty,\]
so there exists $\rho_\lambda>0$ s.t.
\[j_\lambda(\rho_\lambda)=\inf_{\rho>0}j_\lambda(\rho).\]
In particular we have
\[0 = j'_\lambda(\rho_\lambda) = \lambda c_{33}(q-p)\rho_\lambda^{q-p-1}+c_{34}(p^*-p)\rho_\lambda^{p^*-p-1},\]
which yields
\[\rho_\lambda=\Big(\frac{\lambda c_{33}(p-q)}{c_{34}(p^*-p)}\Big)^\frac{1}{p^*-q}.\]
We are interested in the mapping $\lambda\mapsto j_\lambda(\rho_\lambda)$, which amounts to
\[j_\lambda(\rho_\lambda) = c_{36}\lambda^\frac{p^*-p}{p^*-q} \ (c_{36}>0 \ \text{independent of $\lambda$}),\]
and the latter tends to $0$ as $\lambda\to 0^+$. So there exists $\lambda^*>0$ s.t.\ for all $\lambda\in(0,\lambda^*)$ we have $j_\lambda(\rho_\lambda)<c_{35}$. Thus, by \eqref{mpp1} we have for all $u\in\w$ with $\|u\|=\rho_\lambda$
\[\hat\varphi^+_\lambda(u) \ge (c_{35}-j_\lambda(\rho_\lambda))\rho_\lambda^p =: \hat m^+_\lambda>0,\]
which concludes the proof.
\end{proof}

\noindent
Let $\hat u_1=\hat u_1(p,\xi_0,\beta_0)\in D_+$ be the positive, $L^p(\Omega)$-normalized first eigenfunction of the eigenvalue problem \eqref{ep}:

\begin{lemma}\label{bp}
If ${\bf H}$ hold, then for all $\lambda>0$
\[\lim_{t\to +\infty}\hat\varphi^+_\lambda(t\hat u_1)=-\infty.\]
\end{lemma}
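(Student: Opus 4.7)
Since $\hat u_1 \in D_+$, we have $t\hat u_1>0$ in $\overline\Omega$ for all $t>0$, so $(t\hat u_1)^+=t\hat u_1$ and $(t\hat u_1)^-=0$. Consequently, by the definition of $k^+_\lambda$ and $K^+_\lambda$, for a.a.\ $x\in\Omega$
\[K^+_\lambda(x,t\hat u_1(x))=\lambda G(x,t\hat u_1(x))+F(x,t\hat u_1(x))+\frac{\mu}{p}(t\hat u_1(x))^p,\]
so the $\mu$-perturbation cancels with the one appearing inside $\hat\varphi^+_\lambda$. Thus, the plan is to write
\[\hat\varphi^+_\lambda(t\hat u_1)=\int_\Omega H(t\nabla\hat u_1)\,dx+\frac{t^p}{p}\int_\Omega\xi(x)\hat u_1^p\,dx+\frac{t^p}{p}\int_{\partial\Omega}\beta(x)\hat u_1^p\,d\sigma-\lambda\int_\Omega G(x,t\hat u_1)\,dx-\int_\Omega F(x,t\hat u_1)\,dx,\]
and show that for $t\to+\infty$ the $F$-term dominates everything.

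For the first term, Lemma \ref{ah} \ref{ah4} yields $H(t\nabla\hat u_1)\le c_8(1+t^p|\nabla\hat u_1|^p)$, so that term is bounded above by a constant plus $C_1 t^p$ with $C_1:=c_8\|\nabla\hat u_1\|_p^p$. The two linear-in-$\xi$ and $\beta$ terms are obviously of the form $C_2 t^p$ with $C_2$ depending only on $\hat u_1$. For the $G$-term, by ${\bf H}_g$ \ref{hg1} and \ref{hg2}, for every $\eps>0$ there exists $c_\eps>0$ such that $|G(x,t)|\le\frac{\eps}{p}|t|^p+c_\eps$ for a.a.\ $x\in\Omega$ and all $t\in\R$, hence
\[-\lambda\int_\Omega G(x,t\hat u_1)\,dx\le\frac{\lambda\eps\,t^p}{p}\|\hat u_1\|_p^p+\lambda c_\eps|\Omega|=\frac{\lambda\eps\,t^p}{p}+\lambda c_\eps|\Omega|,\]
recalling that $\hat u_1$ is $L^p(\Omega)$-normalized. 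So $G$ contributes a term of order $\eps\,t^p$ with $\eps$ as small as we like, plus a constant.

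The crucial step uses ${\bf H}_f$ \ref{hf3}: for every $M>0$, one finds $c_M>0$ such that $f(x,t)\ge Mt^{p-1}-c_M$ for a.a.\ $x\in\Omega$ and all $t\ge 0$, and integration in $t$ gives $F(x,t)\ge\frac{M}{p}t^p-c_M t$ (or, absorbing the linear term, $F(x,t)\ge\frac{M}{p}t^p-\tilde c_M$ for a suitable $\tilde c_M>0$). Therefore
\[-\int_\Omega F(x,t\hat u_1)\,dx\le-\frac{M\,t^p}{p}\|\hat u_1\|_p^p+\tilde c_M|\Omega|=-\frac{M\,t^p}{p}+\tilde c_M|\Omega|.\]
Collecting all estimates, we obtain
\[\hat\varphi^+_\lambda(t\hat u_1)\le t^p\Bigl(C_1+C_2+\frac{\lambda\eps}{p}-\frac{M}{p}\Bigr)+C(\eps,M,\lambda,\Omega),\]
where the last term is a constant independent of $t$. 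Fix any $\eps\in(0,1)$; since $M>0$ is arbitrary, we choose $M>pC_1+pC_2+\lambda\eps$ so that the coefficient of $t^p$ is strictly negative. Letting $t\to+\infty$ then yields $\hat\varphi^+_\lambda(t\hat u_1)\to-\infty$, as required. There is no genuine obstacle here: the argument is a routine coercivity/anticoercivity computation, the only point deserving care being that one must use the superlinearity condition ${\bf H}_f$ \ref{hf3} with $M$ chosen \emph{after} fixing the constants coming from $H$, $\xi$, $\beta$ and the concave term $G$.
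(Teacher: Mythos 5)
Your proposal is correct and follows essentially the same route as the paper. You first observe the cancellation of the $\mu$-perturbation inside $K^+_\lambda$ against the $\mu$ in the potential term (since $t\hat u_1>0$), bound the $H$, $\xi$, and $\beta$ contributions by $O(t^p)$ via Lemma~\ref{ah}~\ref{ah4}, control the concave term $G$ as an $o(t^p)$ perturbation using ${\bf H}_g$~\ref{hg2}, and let the superlinear $F$-term (from ${\bf H}_f$~\ref{hf3}, together with ${\bf H}_f$~\ref{hf1} to handle the compact set $[0,T_M]$) dominate. The paper proves the same fact slightly more compactly: instead of handling $G$ and $F$ separately and cancelling $\mu$ upfront, it establishes a single estimate $\lambda G(x,t)+F(x,t)\ge\frac{\eta-\mu}{p}t^p$ for $t$ large, so that the $-\mu/p$ inside $K^+_\lambda$ and the $+\mu/p$ in the potential both get absorbed into the bookkeeping with $\eta$, and then chooses $\eta$ large; the structure of the argument is identical to yours, and your version is, if anything, slightly more transparent.
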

\begin{proof}
Fix $\lambda>0$. By ${\bf H}_g$ \ref{hg2}, ${\bf H}_f$ \ref{hf3}, for all $\eta>0$ we can find $c_{37}=c_{37}(\eta)>0$ s.t.\ for a.a.\ $x\in\Omega$, all $t\ge c_{37}$
\[\lambda G(x,t)+F(x,t) \ge\frac{\eta-\mu}{p}t^p.\]
Since $\hat u_1\in D_+$, for all $t>0$ big enough we have $t\hat u_1(x)\ge c_{37}$ for all $x\in\overline\Omega$. Then by Lemma \ref{ah} \ref{ah4} we have
\begin{align*}
&\hat\varphi^+_\lambda(t\hat u_1) \le c_8\int_\Omega(1+t^p|\nabla \hat u_1|^p)\,dx+\frac{1}{p}\int_\Omega(\xi(x)+\mu)(t\hat u_1)^p\,dx+\frac{1}{p}\int_{\partial\Omega}\beta(x)(t\hat u_1)^p\,d\sigma-\int_\Omega \frac{\eta}{p}(t\hat u_1)^p\,dx \\
&\le c_{38}+\Big(c_{39}\|\hat u_1\|^p-\frac{\eta}{p}\|\hat u_1\|_p^p\Big)t^p \ (c_{38},c_{39}>0).
\end{align*}
Choosing $\eta>0$ big enough, the latter tends to $-\infty$ as soon as $t\to +\infty$, concluding the proof.
\end{proof}

\noindent
The above lemmas lead, through the use of the mountain pass theorem and constrained minimization, to the existence of two positive solutions:

\begin{proposition}\label{sp}
If ${\bf H}$ hold, then there exists $\lambda^*>0$ s.t.\ for all $\lambda\in(0,\lambda^*)$ problem \eqref{rp} admits at least two positive solutions $u_+,v_+\in D_+$.
\end{proposition}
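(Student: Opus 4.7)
The plan is to exhibit the two positive solutions as critical points of the truncated/perturbed functional $\hat\varphi^+_\lambda$ living at different energy levels: $u_+$ as a local minimizer with negative critical value and $v_+$ as a mountain pass point at level $\ge \hat m^+_\lambda>0$. First I would fix the positive eigenfunction $\hat u_1\in D_+$ and evaluate $\hat\varphi^+_\lambda(t\hat u_1)$ for small $t>0$. Since $t\hat u_1\ge 0$, the $\mu$-contributions coming from $(\xi+\mu)|u|^p/p$ and from $K^+_\lambda$ cancel, and using $H(y)\le C|y|^r$ near $0$ (from ${\bf H}_a$ \ref{ha4}), $|F(x,t)|=o(|t|^p)$ as $t\to 0$ (from ${\bf H}_f$ \ref{hf6}), and $G(x,t)\ge c_9t^q/q$ for $t\ge 0$ (integrating ${\bf H}_g$ \ref{hg3}), the $-\lambda c_9 t^q$ contribution dominates because $q<r\le p$, so $\hat\varphi^+_\lambda(t\hat u_1)<0$ for $t>0$ small enough.

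Next, with $\lambda\in(0,\lambda^*)$ and $\rho_\lambda$ as in Lemma \ref{mpp}, I would minimize $\hat\varphi^+_\lambda$ on the weakly compact closed ball $\overline B_{\rho_\lambda}\subset\w$. The $H$, $\xi$, $\beta$ contributions are weakly lsc by convexity of $H$ and compactness of the embeddings $\w\hookrightarrow L^p(\Omega),L^p(\partial\Omega)$. The reaction integral is handled by a Vitali-type argument as in the proof of Lemma \ref{cp}: hypothesis ${\bf H}_f$ \ref{hf2} yields the uniform bound $|F(x,t)|\le\eps|t|^{p^*}+c_\eps$, so $\{F(\cdot,u_n^+)\}$ is uniformly integrable on bounded sequences, and the a.e.\ convergence provided by the compact embeddings lets us pass to the limit. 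The Step 1 computation shows the infimum over $\overline B_{\rho_\lambda}$ is strictly negative, while on $\partial B_{\rho_\lambda}$ it equals $\hat m^+_\lambda>0$, so the minimizer $u_+$ lies in the open ball, is hence a critical point of $\hat\varphi^+_\lambda$, and satisfies $\hat\varphi^+_\lambda(u_+)<0$. Then, combining $(C)$ from Lemma \ref{cp}, the separating sphere from Lemma \ref{mpp}, and any $t^*>0$ large enough that $\|t^*\hat u_1\|>\rho_\lambda$ and $\hat\varphi^+_\lambda(t^*\hat u_1)<0$ (Lemma \ref{bp}), the mountain pass theorem yields $v_+\in K(\hat\varphi^+_\lambda)$ with $\hat\varphi^+_\lambda(v_+)\ge\hat m^+_\lambda>0>\hat\varphi^+_\lambda(u_+)$, so $v_+\neq u_+$.

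Finally, for any $u\in K(\hat\varphi^+_\lambda)$, testing the Euler equation against $-u^-$ and using Lemma \ref{ah} \ref{ah3} together with the fact that $k^+_\lambda(x,t)\equiv 0$ for $t\le 0$, \eqref{mu}, and ${\bf H}_\beta$, forces $u^-=0$; on $\{u\ge 0\}$ the $\mu u^{p-1}$ terms cancel in the Euler equation and one recovers exactly the weak formulation \eqref{wrp} of problem \eqref{rp}. The nonlinear regularity theory of \cite{L} places $u_+,v_+\in C^1(\overline\Omega)$, and the nonlinear strong maximum principle of \cite{PS}, together with the nonnegativity of the reaction near $0$ coming from ${\bf H}_g$ \ref{hg3} and ${\bf H}_f$ \ref{hf7}, upgrades both to $D_+$. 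The most delicate step I expect is the weak lower semicontinuity argument in Step 2: the almost-critical growth in ${\bf H}_f$ \ref{hf2} obstructs a direct appeal to $\w\hookrightarrow L^{p^*}(\Omega)$, and the Vitali-type uniform integrability trick borrowed from Lemma \ref{cp} is what rescues the argument.
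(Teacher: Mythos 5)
Your proposal is correct and follows essentially the same route as the paper: truncate/perturb to $\hat\varphi^+_\lambda$, obtain one solution by minimizing over $\overline B_{\rho_\lambda}$ (using the Vitali uniform integrability argument from Lemma \ref{cp} to secure sequential weak lower semicontinuity despite the almost-critical growth) and the other via the mountain pass theorem with the geometry of Lemmas \ref{mpp} and \ref{bp}, then test the Euler equation against $-u^-$ and pass through nonlinear regularity and the nonlinear maximum principle to land in $D_+$. The only differences are cosmetic — you interchange the labels $u_+$, $v_+$ and reverse the order of the two constructions, and you spell out the weak l.s.c.\ step (which the paper asserts in one line) in more detail.
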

\begin{proof}
Fix $\lambda\in(0,\lambda^*)$. By Lemmas \ref{mpp}, \ref{bp} we can find $t>0$ s.t.\ $\hat\varphi^+_\lambda(t\hat u_1)<\hat m^+_\lambda$. Recalling also Lemma \ref{cp}, we can apply the mountain pass theorem and find $u_+\in K(\hat\varphi^+_\lambda)$ s.t.\ $\hat\varphi^+_\lambda(u_+)\ge\hat m^+_\lambda>0$. In particular we have $u_+\neq 0$. For all $v\in\w$ we have
\beq\label{sp1}
\langle A(u_+),v\rangle+\int_\Omega(\xi(x)+\mu)|u_+|^{p-2}u_+v\,dx+\int_{\partial\Omega}\beta(x)|u_+|^{p-2}u_+v\,d\sigma=\int_\Omega k^+_\lambda(x,u_+)v\,dx.
\eeq
Choosing $v=-u_+^-$ in \eqref{sp1} and applying Lemma \ref{ah} \ref{ah3} yields
\[\frac{c_2}{p-1}\|\nabla u_+^-\|_p^p+\int_\Omega(\xi(x)+\mu)(u_+^-)^p\,dx \le 0,\]
hence $u_+\in\w_+\setminus\{0\}$. So \eqref{sp1} becomes for all $v\in\w$
\[\langle A(u_+),v\rangle+\int_\Omega\xi(x)u_+^{p-1}v\,dx+\int_{\partial\Omega}\beta(x)u_+^{p-1}v\,d\sigma=\int_\Omega [\lambda g(x,u_+)+f(x,u_+)]v\,dx,\]
i.e., $u_+$ is a solution of \eqref{rp}. Reasoning as in \cite{PR4,PR5} we have $u_+\in L^\infty(\Omega)$. By the nonlinear regularity theory of \cite{L} we have $u_+\in C^1(\overline\Omega)_+\setminus\{0\}$. For a.a.\ $x\in\Omega$ we have by ${\bf H}_g$ \ref{hg3}
\[g(x,u_+(x))\ge 0,\]
while by ${\bf H}_f$ \ref{hf2} \ref{hf6} we have 
\[f(x,u_+(x))\ge -c_{40}u_+(x)^{p^*-1} \ (c_{40}>0).\]
So we have in $\Omega$ (in a weak sense)
\[{\rm div}\,a(\nabla u_+)\le\|\xi\|_\infty u_+^{p-1}+c_{40}u_+^{p^*-1}\le c_{41}u_+^{p-1} \ (c_{41}>0),\]
since $u_+$ is bounded. By the nonlinear maximum principle \cite[pp.\ 111, 120]{PS} we have $u_+\in D_+$.
\vskip2pt
\noindent
Now let $\delta_0>0$ be as in ${\bf H}_g$ \ref{hg5}. By ${\bf H}_a$ \ref{ha4} we can find $\delta_2\in (0,\delta_0)$, $c_{42}>c_5$ s.t.\ for all $y\in\R^N$, $|y|\le\delta_2$
\[H(y)\le c_{42}|y|^r.\]
Fix any $u\in D_+$. For $t>0$ small enough we have $\|tu\|_{C^1(\overline\Omega)}\le\delta_2$, in particular $|\nabla (tu)(x)|\le\delta_2$ for all $x\in\overline\Omega$. Thus, recalling ${\bf H}_g$ \ref{hg3} \ref{hg5}, ${\bf H}_f$ \ref{hf6}, we have
\begin{align*}
&\hat\varphi^+_\lambda(tu) = \int_\Omega H(\nabla (tu))\,dx+\frac{t^p}{p}\int_\Omega\xi(x)u^p\,dx+\frac{t^p}{p}\int_{\partial\Omega}\beta(x)u^p\,d\sigma-\int_\Omega\big[\lambda G(x,tu)+F(x,tu)\big]\,dx \\
&\le c_{42}t^r\|\nabla u\|_r^r+c_{43}\frac{t^p}{p}\|u\|^p-\lambda c_{44}\frac{t^q}{q}\|u\|^q \ (c_{43},c_{44}>0).
\end{align*}
Recalling that $q<r\le p$, we deduce that $\hat\varphi^+_\lambda(tu)<0$ for all $t>0$ small enough. Let $\rho_\lambda>0$ be as in Lemma \ref{mpp}, then we have
\[\inf_{\|u\|\le\rho_\lambda}\hat\varphi^+_\lambda(u)<0.\]
By ${\bf H}_a$ \ref{ha4} and the compact embeddings $\w\hookrightarrow L^p(\Omega),\,L^p(\partial\Omega)$, $\hat\varphi^+_\lambda$ is sequentially weakly l.s.c.\ in $\w$. So we can find $v_+\in\w$ s.t.\ $\|v_+\|\le\rho_\lambda$ and
\[\hat\varphi^+_\lambda(v_+)=\inf_{\|u\|\le\rho_\lambda}\hat\varphi^+_\lambda(u)<0<\hat m^+_\lambda.\]
In particular $v_+\neq 0,u_+$ and $\|v_+\|<\rho_\lambda$. Therefore $v_+\in K(\hat\varphi^+_\lambda)$. As above we deduce that $v_+\in D_+$ and is a solution of \eqref{rp}.
\end{proof}

\noindent
The case of $\hat\varphi^-_\lambda$ is simpler:

\begin{lemma}\label{cn}
If ${\bf H}$ hold, then for all $\lambda>0$ $\hat\varphi^-_\lambda\in C^1(\w)$ is coercive and satisfies $(C)$.
\end{lemma}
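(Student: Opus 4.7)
The plan divides into three parts. First, by ${\bf H}_g$ (i),(ii) and ${\bf H}_f$ (i),(ii),(v) the truncated reaction $k^-_\lambda$ is Carath\'eodory with almost-critical growth, and together with Lemma \ref{ah} (iv) this yields $\hat\varphi^-_\lambda\in C^1(\w)$. For coercivity, I would exploit the decomposition
\[
\hat\varphi^-_\lambda(u) = \mathcal A(u^+) + \varphi_\lambda(-u^-), \qquad \mathcal A(v) := \int_\Omega H(\nabla v)\,dx + \frac{1}{p}\int_\Omega (\xi+\mu)v^p\,dx + \frac{1}{p}\int_{\partial\Omega}\beta v^p\,d\sigma,
\]
which is immediate once one observes that $K^-_\lambda(x,\cdot)$ vanishes on $\R_+$ and equals $\lambda G(x,\cdot)+F(x,\cdot)+\mu|\cdot|^p/p$ on $\R_-$. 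By \eqref{mu}, ${\bf H}_\beta$, and Lemma \ref{ah} (iv), one has $\mathcal A(u^+)\ge c\|u^+\|^p$ for some $c>0$, so it suffices to show that $\varphi_\lambda(w)\to+\infty$ as $\|w\|\to+\infty$ over $w\le 0$.

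Arguing by contradiction, take $(w_n)\subset\w$ with $w_n\le 0$, $\|w_n\|\to+\infty$, and $\varphi_\lambda(w_n)\le M$. Normalize $y_n=w_n/\|w_n\|$, and up to a subsequence $y_n\rightharpoonup y\le 0$ in $\w$ with $y_n\to y$ in $L^p(\Omega)$ and $L^p(\partial\Omega)$. Using Lemma \ref{ah} (iv), the asymptotic upper bound
\[
\lambda G(x,t)+F(x,t)\le\frac{1}{p}\Bigl(\frac{c_2\hat\lambda_1}{p-1}+\eps\Bigr)|t|^p+c_\eps \quad (t\le 0),
\]
deduced from ${\bf H}_g$ (ii) and ${\bf H}_f$ (v), and the variational characterization of $\hat\lambda_1$, dividing $\varphi_\lambda(w_n)/\|w_n\|^p\le M/\|w_n\|^p$ by $\|w_n\|^p$ and passing to $\liminf$ (then sending $\eps\to 0^+$) gives
\[
0\ge\frac{c_2}{p(p-1)}\Bigl[\|\nabla y\|_p^p+\int_\Omega \xi_0|y|^p\,dx+\int_{\partial\Omega}\beta_0|y|^p\,d\sigma-\hat\lambda_1\|y\|_p^p\Bigr].
\]
If $y=0$, compact embedding forces $\|y_n\|_p\to 0$ and hence $\|\nabla y_n\|_p\to 1$, so the above liminf is in fact $\ge c_2/(p(p-1))>0$, contradiction. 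Otherwise the bracket vanishes; then $y$ attains the infimum defining $\hat\lambda_1$, and since $y\le 0$ and $\hat u_1\in D_+$ is the (sign-definite) first eigenfunction, $y=-c\hat u_1$ for some $c>0$. In particular $w_n(x)=y_n(x)\|w_n\|\to-\infty$ a.a.\ in $\Omega$ (along a further subsequence for pointwise $L^p$-convergence).

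The main obstacle is closing the contradiction in this resonance case; here ${\bf H}_e$ (ii) is essential. The growth conditions in ${\bf H}_g$ (i), ${\bf H}_f$ (i),(ii),(v) ensure that $e_\lambda(\cdot,t)$ is uniformly bounded below on $\R_-$, so by Fatou $\int_\Omega e_\lambda(x,w_n)\,dx\to+\infty$. Using the algebraic identity $p[\lambda G+F]=[\lambda g+f]t-e_\lambda$ inside $p\varphi_\lambda(w_n)$, together with ${\bf H}_a$ (iv), Lemma \ref{ah} (iii), and the variational characterization of $\hat\lambda_1$, one obtains
\[
p\varphi_\lambda(w_n)\ge -c_6|\Omega|-(1+\lambda)\eps\|w_n\|_p^p-c_\eps+\int_\Omega e_\lambda(x,w_n)\,dx
\]
for every $\eps>0$; balancing $\eps$ against the rate of blow-up of $\int e_\lambda$, which in turn is driven by $|\{w_n\le -R\}|\to|\Omega|$ (for any fixed $R$), one forces $\varphi_\lambda(w_n)\to+\infty$, contradicting $\varphi_\lambda(w_n)\le M$. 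Once coercivity is in hand, any Cerami sequence for $\hat\varphi^-_\lambda$ is bounded in $\w$; extracting $u_n\rightharpoonup u$ and repeating the Vitali argument of Lemma \ref{cp} with $k^-_\lambda$ in place of $k^+_\lambda$ yields $\limsup_n\langle A(u_n),u_n-u\rangle\le 0$, so the $(S)_+$-property of $A$ delivers strong convergence and establishes $(C)$.
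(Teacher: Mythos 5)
Your decomposition $\hat\varphi^-_\lambda(u)=\mathcal{A}(u^+)+\varphi_\lambda(-u^-)$ is a clean and valid reduction of coercivity to the behavior of $\varphi_\lambda$ on the negative cone, and the first half of the contradiction argument (normalize, extract $y$, pass to the Rayleigh quotient, dispose of $y=0$) is correct. The paper does the weak-limit step via Dunford--Pettis and a limit function $\theta^-_\lambda\le\hat\lambda_1$ in $L^1(\Omega)$, but your more elementary passage through the Rayleigh inequality is an acceptable variant of the same dichotomy.

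The gap is in the resonance case. Your final estimate
\[
p\varphi_\lambda(w_n)\ge -c_6|\Omega|-(1+\lambda)\eps\|w_n\|_p^p-c_\eps+\int_\Omega e_\lambda(x,w_n)\,dx
\]
cannot be closed. For any fixed $\eps>0$ the middle term diverges like $-\eps\|w_n\|^p$ (since $\|y_n\|_p\to c\|\hat u_1\|_p>0$), while ${\bf H}_e$ \ref{he2} gives \emph{no rate} for $\int_\Omega e_\lambda(x,w_n)\,dx\to+\infty$ -- take $e_\lambda(x,t)\sim\ln|t|$ and the right-hand side tends to $-\infty$. Nor can you "balance $\eps$": sending $\eps\to 0$ makes $c_\eps\to+\infty$ with no quantitative link to the blow-up of $\int e_\lambda$. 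The defect is that the bound from ${\bf H}_g$ \ref{hg2}, ${\bf H}_f$ \ref{hf5} places the $\eps$ in front of the leading $|t|^p$-term, and nothing you invoke removes that slack. The paper's device (its estimate \eqref{cn1}) does exactly that: for $t<0$ one has the identity
\[
\frac{d}{dt}\,\frac{\lambda G(x,t)+F(x,t)}{|t|^p}=\frac{e_\lambda(x,t)}{|t|^p\,t},
\]
and integrating $e_\lambda(x,t)\ge\eta$ for $t\le-c(\eta)$ (from ${\bf H}_e$ \ref{he2}) together with the $t\to-\infty$ $\limsup$ bound of ${\bf H}_g$ \ref{hg2}, ${\bf H}_f$ \ref{hf5} yields the \emph{sharp-coefficient} asymptotic
\[
\lim_{t\to-\infty}\Big(\frac{c_2\hat\lambda_1}{p(p-1)}|t|^p-\lambda G(x,t)-F(x,t)\Big)=+\infty\quad\text{uniformly for a.a. }x\in\Omega,
\]
with no $\eps|t|^p$ correction. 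Splitting off the (nonnegative) Rayleigh quotient and applying Fatou to this quantity closes the resonance case with no competing negative term; you should replace your $e_\lambda$-estimate by this integrated one. The Cerami step is fine, though invoking Vitali there is unnecessary: $k^-_\lambda$ has pure $(p-1)$-growth, so H\"older already suffices.
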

\begin{proof}
Preliminarily we prove that uniformly for a.a.\ $x\in\Omega$
\beq\label{cn1}
\lim_{t\to -\infty}\Big(\frac{c_2}{p(p-1)}\hat\lambda_1|t|^p-K^-_\lambda(x,t)\Big)=+\infty.
\eeq
Indeed, by ${\bf H}_e$ \ref{he1}, for all $\eta>0$ we can find $c_{45}=c_{45}(\eta)>0$ s.t.\ for a.a.\ $x\in\Omega$, all $t\le -c_{45}$
\[\lambda g(x,t)t+f(x,t)t-p\big(\lambda G(x,t)+F(x,t)\big)\ge\eta.\]
Recalling the definition of $K^-_\lambda$, for a.a.\ $x\in\Omega$, all $t<0$ we have
\begin{align*}
&\frac{d}{dt}\frac{K^-_\lambda(x,t)}{|t|^p} = \frac{\big(\lambda g(x,t)+f(x,t)+\mu |t|^{p-2}t\big)|t|^p-\big(\lambda G(x,t)+F(x,t)+\mu/p|t|^p\big)p|t|^{p-2}t}{|t|^{2p}} \\
&= \frac{\lambda g(x,t)t+f(x,t)t-p\big(\lambda G(x,t)+F(x,t)\big)}{|t|^pt} \le \frac{\eta}{|t|^pt}
\end{align*}
(recall that $t<0$). So, for a.a.\ $x\in\Omega$ and all $t'<t\le -c_{45}$ we have
\beq\label{cn2}
\frac{K^-_\lambda(x,t)}{|t|^p}-\frac{K^-_\lambda(x,t')}{|t'|^p} \le \int_{t'}^t\frac{\eta}{|\tau|^p\tau}\,d\tau \le \frac{\eta}{p}\Big(\frac{1}{|t'|^p}-\frac{1}{|t|^p}\Big).
\eeq
Besides, by ${\bf H}_g$ \ref{hg2}, ${\bf H}_f$ \ref{hf5} we have uniformly for a.a.\ $x\in\Omega$
\beq\label{cn3}
-c_{46} \le \liminf_{t\to -\infty}\frac{K^-_\lambda(x,t)}{|t|^p} \le \limsup_{t\to -\infty}\frac{K^-_\lambda(x,t)}{|t|^p} \le \frac{c_2 \hat\lambda_1}{p(p-1)}+\frac{\mu}{p} \ (c_{46}>0).
\eeq
Passing to the limit in \eqref{cn2} as $t'\to -\infty$ and applying \eqref{cn3}, we see that for a.a.\ $x\in\Omega$, all $t\le -c_{45}$
\[\frac{c_2 \hat\lambda_1}{p(p-1)}|t|^p-K^-_\lambda(x,t)\le\frac{\mu}{p}|t|^p-\frac{\eta}{p},\]
and the latter tends to $+\infty$ as $t\to -\infty$, yielding \eqref{cn1}.
\vskip2pt
\noindent
Now we prove coercivity, arguing by contradiction. Let $(u_n)$ be a sequence in $\w$ s.t.\ $\|u_n\|\to +\infty$ and $|\hat\varphi^-_\lambda(u_n)|\le c_{47}$ ($c_{47}>0$). For all $n\in\N$ set $w_n=\|u_n\|^{-1}u_n$, so $\|w_n\|=1$.  Passing if necessary to a subsequence, we have $w_n\rightharpoonup w$ in $\w$ and $w_n\to w$ in $L^p(\Omega)$ and $L^p(\partial\Omega)$. By Lemma \ref{ah} \ref{ah4} we have for all $n\in\N$
\beq\label{cn4}
\frac{c_2}{p(p-1)}\|\nabla w_n\|_p^p+\frac{1}{p}\int_\Omega(\xi(x)+\mu)|w_n|^p\,dx+\frac{1}{p}\int_{\partial\Omega}\beta(x)|w_n|^p\,d\sigma-\int_\Omega\frac{K^-_\lambda(x,u_n)}{\|u_n\|^p}\,dx\le\frac{c_{47}}{\|u_n\|^p}.
\eeq
By ${\bf H}_g$ \ref{hg1} \ref{hg2}, ${\bf H}_f$ \ref{hf1} \ref{hf5} we have for a.a.\ $x\in\Omega$, all $t\in\R$
\[|K^-_\lambda(x,t)|\le c_{48}(1+|t|^p) \ (c_{48}>0).\]
Thus, the sequence $(K^-_\lambda(\cdot,u_n)\|u_n\|^{-p})$ is uniformly integrable in $\Omega$. By the Dunford-Pettis theorem, then, it admits a weakly convergent subsequence in $L^1(\Omega)$. More precisely, using \eqref{cn3} and arguing as in \cite[Proposition 30]{APS}, we can find $\theta^-_\lambda\in L^\infty(\Omega)$ s.t.\ $\theta^-_\lambda(x)\le\hat\lambda_1$ for a.a.\ $x\in\Omega$ and
\[\frac{K^-_\lambda(\cdot,u_n)}{\|u_n\|^p} \rightharpoonup \Big(\frac{c_2}{p(p-1)}\theta^-_\lambda+\frac{\mu}{p}\Big)(w^-)^p \ \text{in $L^1(\Omega)$.}\]
Passing to the limit in \eqref{cn4} as $n\to\infty$, and recalling that by convexity
\[\|\nabla w\|_p^p\le\liminf_n\|\nabla w_n\|_p^p,\]
we get
\[\frac{c_2}{p(p-1)}\|\nabla w\|_p^p+\frac{1}{p}\int_\Omega(\xi(x)+\mu)|w|^p\,dx+\frac{1}{p}\int_{\partial\Omega}\beta(x)|w|^p\,d\sigma\le\int_\Omega\Big(\frac{c_2}{p(p-1)}\theta^-_\lambda(x)+\frac{\mu}{p}\Big)(w^-)^p\,dx.\]
Operating on both sides and recalling the definitions of $\xi_0$, $\beta_0$, we have
\beq\label{cn5}
\|\nabla w^-\|_p^p+\int_\Omega\xi_0(x)(w^-)^p\,dx+\int_{\partial\Omega}\beta_0(x)(w^-)^p\,d\sigma \le \int_\Omega\theta^-_\lambda(x)(w^-)^p\,dx.
\eeq
Now we distinguish two cases:
\begin{itemize}[leftmargin=1cm]
\item[$(a)$] If $\theta^-_\lambda\not\equiv\hat\lambda_1$ (non-resonance), then we can find $c_{49}>0$ s.t.\ for all $v\in\w$
\[\|\nabla v\|_p^p+\int_\Omega\xi_0(x)|v|^p\,dx+\int_{\partial\Omega}\beta_0(x)|v|^p\,d\sigma-\int_\Omega\theta^-_\lambda(x)|v|^p\,dx \ge c_{49}\|v\|^p.\]
So, by \eqref{cn5} we have $w^-=0$, i.e., $w\in\w_+$. Passing to the limit in \eqref{cn4}, we get
\[\frac{c_2}{p(p-1)}\|\nabla w\|_p^p+\frac{1}{p}\int_\Omega(\xi(x)+\mu)w^p\,dx+\frac{1}{p}\int_{\partial\Omega}\beta(x)w^p\,d\sigma\le 0,\]
hence $w=0$. Thus, using again \eqref{cn4} we see that $\|\nabla w_n\|_p\to 0$, which along with $w_n\to 0$ in $L^p(\Omega)$ yields $w_n\to 0$ in $\w$, against $\|w_n\|=1$.
\item[$(b)$] If $\theta^-_\lambda\equiv\hat\lambda_1$ (resonance), then \eqref{cn5} becomes
\[\|\nabla w^-\|_p^p+\int_\Omega\xi_0(x)(w^-)^p\,dx+\int_{\partial\Omega}\beta_0(x)(w^-)^p\,d\sigma\le\hat\lambda_1\|w^-\|_p^p.\]
By the Lagrange multiplier rule and the definition of $\hat\lambda_1$, either $w^-=0$, or $w^-\in\w_+$ is a principal eigenfunction of \eqref{ep}. If $w^-=0$, arguing as in case $(a)$ we reach a contradiction. So, let $w^-\in\w_+\setminus\{0\}$ be a principal eigenfunction of \eqref{ep}. Then $w^-\in D_+$, which implies $u_n(x)\to -\infty$ for a.a.\ $x\in\Omega$. Using \eqref{cn1} and Fatou's lemma, we have
\[\lim_n\int_\Omega\Big(\frac{c_2}{p(p-1)}\hat\lambda_1|u_n|^p-K^-_\lambda(x,u_n)\Big)\,dx=+\infty.\]
So, for all $n\in\N$ big enough
\begin{align*}
&\hat\varphi^-_\lambda(u_n) \ge \frac{c_2}{p(p-1)}\Big(\|\nabla u_n\|_p^p+\int_\Omega\xi_0(x)|u_n|^p\,dx+\int_{\partial\Omega}\beta_0(x)|u_n|^p\,d\sigma-\hat\lambda_1\|u_n\|_p^p\Big) \\
&+\int_\Omega\Big(\frac{c_2}{p(p-1)}\hat\lambda_1|u_n|^p-K^-_\lambda(x,u_n)\Big)\,dx,
\end{align*}
and the latter tends to $+\infty$ as $n\to\infty$, against $|\hat\varphi^-_\lambda(u_n)|\le c_{47}$.
\end{itemize}
In both cases we reach a contradiction, which proves that
\beq\label{cn6}
\lim_{\|u\|\to +\infty}\hat\varphi^-_\lambda(u)=+\infty.
\eeq
Now we prove that $\hat\varphi^-_\lambda$ satisfies $(C)$. Let $(u'_n)$ be a sequence in $\w$ s.t.\ $(\hat\varphi^-_\lambda(u'_n))$ is bounded in $\R$ and $(1+\|u'_n\|)(\hat\varphi^-_\lambda)'(u'_n)\to 0$ in $\w^*$. So, we can find a sequence $(\eps_n)$ s.t.\ $\eps_n\to 0^+$ and for all $n\in\N$, $v\in\w$
\[\Big|\langle A(u'_n),v\rangle+\int_\Omega(\xi(x)+\mu)|u'_n|^{p-2}u'_nv\,dx+\int_{\partial\Omega}\beta(x)|u'_n|^{p-2}u'_nv\,d\sigma-\int_\Omega k^-_\lambda(x,u'_n)v\,dx\Big|\le\frac{\eps\|v\|}{1+\|u'_n\|}.\]
By \eqref{cn6} $(u'_n)$ is bounded, so passing to a subsequence we have $u'_n\rightharpoonup u'$ in $\w$ and $u'_n\to u'$ in $L^p(\Omega)$ and $L^p(\partial\Omega)$. So, choosing $v=u'_n-u'$, we easily get
\[\limsup_n\langle A(u'_n),u'_n-u'\rangle \le 0,\]
which by the $(S)_+$-property of $A$ implies $u'_n\to u'$ in $\w$, concluding the proof.
\end{proof}

\noindent
By applying the direct method of the calculus of variations, we produce a negative solution:

\begin{proposition}\label{sn}
If ${\bf H}$ hold, then for all $\lambda>0$ problem \eqref{rp} admits at least one negative solution $u_-\in -D_+$.
\end{proposition}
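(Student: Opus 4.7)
My plan is to apply the direct method of the calculus of variations to $\hat\varphi^-_\lambda$. By Lemma \ref{cn}, coercivity and the $(C)$ condition are already in hand, so what remains for a Weierstrass-type minimization is sequential weak lower semicontinuity. I would argue that $u\mapsto\int_\Omega H(\nabla u)\,dx$ is w.l.s.c.\ by convexity of $H$, and the remaining ingredients—the volume and boundary $|u|^p$ terms and the Nemitskii term $\int_\Omega K^-_\lambda(x,u)\,dx$—are weakly continuous through the compact embeddings $\w\hookrightarrow L^p(\Omega),\,L^p(\partial\Omega)$; note that $K^-_\lambda(x,\cdot)$ vanishes on $\R_+$ and has $p$-growth on $\R_-$ by ${\bf H}_f$ \ref{hf5}, ${\bf H}_g$ \ref{hg2}. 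Hence $\hat\varphi^-_\lambda$ attains its infimum at some $u_-\in K(\hat\varphi^-_\lambda)$.

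To ensure $u_-\neq 0$, I would exhibit a point where the functional is strictly negative. Taking the eigenfunction $\hat u_1\in D_+$ and imitating the second half of the proof of Proposition \ref{sp}, I would evaluate $\hat\varphi^-_\lambda(-t\hat u_1)$ for $t>0$ small. Since $-t\hat u_1<0$ everywhere in $\overline\Omega$, the $\mu$-terms inside $K^-_\lambda$ cancel the ones added to $\xi$; then the local bound $H(y)\le c_{42}|y|^r$ near $0$ (from ${\bf H}_a$ \ref{ha4}) yields an $O(t^r)$ contribution from $\int_\Omega H(-t\nabla\hat u_1)\,dx$, ${\bf H}_g$ \ref{hg3} integrates to $G(x,-t\hat u_1)\ge(c_9/q)t^q\hat u_1^q$, and ${\bf H}_f$ \ref{hf7} forces $F(x,-t\hat u_1)\ge 0$ for $t$ small enough. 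Summing, I expect an estimate of the form
\[\hat\varphi^-_\lambda(-t\hat u_1)\le c_{50}t^r+c_{51}t^p-\frac{\lambda c_9}{q}t^q\|\hat u_1\|_q^q,\]
which is negative for small $t$ since $q<r\le p$, giving $u_-\neq 0$.

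Finally I would pin down the sign and regularity of $u_-$. Testing the Euler equation $(\hat\varphi^-_\lambda)'(u_-)=0$ with $v=u_-^+$ kills the right-hand side (since $k^-_\lambda(x,\cdot)=0$ on $\R_+$) and, via Lemma \ref{ah} \ref{ah3} and the choice of $\mu$ in \eqref{mu}, forces
\[\frac{c_2}{p-1}\|\nabla u_-^+\|_p^p+\int_\Omega(\xi(x)+\mu)(u_-^+)^p\,dx+\int_{\partial\Omega}\beta(x)(u_-^+)^p\,d\sigma\le 0,\]
so $u_-^+=0$, i.e.\ $u_-\in-\w_+\setminus\{0\}$. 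On $\{u_-<0\}$ the truncated reaction $k^-_\lambda(x,u_-)$ coincides with $\lambda g(x,u_-)+f(x,u_-)+\mu|u_-|^{p-2}u_-$, so the critical point equation collapses to the weak form \eqref{wrp} of \eqref{rp}. The closing bootstrap—$L^\infty$ estimate via \cite{PR4,PR5}, $C^1$-regularity via the nonlinear theory of \cite{L}, and $u_-\in-D_+$ via the nonlinear strong maximum principle of \cite{PS}—proceeds essentially verbatim as at the end of Proposition \ref{sp}, once one observes that ${\bf H}_g$ \ref{hg3} and ${\bf H}_f$ \ref{hf5}, \ref{hf6} yield a differential inequality $-{\rm div}\,a(\nabla(-u_-))\le c(-u_-)^{p-1}$ on the bounded function $-u_-$. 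The only genuinely delicate point, I expect, is checking that the $-t\hat u_1$ test really produces a negative value (the cancellation of the $\mu$-terms makes the concave contribution $-t^q$ dominate $+t^r,+t^p$); everything else is the standard coercive direct method, considerably simpler than the mountain pass scheme used for $u_+,v_+$.
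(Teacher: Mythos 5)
Your proof follows the paper's route essentially verbatim: Lemma \ref{cn} (coercivity and $(C)$) plus sequential weak lower semicontinuity give a global minimizer $u_-$ of $\hat\varphi^-_\lambda$; the small-$t$ test along $-t\hat u_1$ (mirroring the computation at the end of the proof of Proposition \ref{sp}, with the $\mu$-cancellation and the bounds from ${\bf H}_a$ \ref{ha4}, ${\bf H}_g$ \ref{hg3}, ${\bf H}_f$ \ref{hf7} giving $O(t^r)+O(t^p)-\lambda\frac{c_9}{q}t^q\|\hat u_1\|_q^q<0$ since $q<r\le p$) shows $\hat\varphi^-_\lambda(u_-)<0$ and hence $u_-\neq 0$; testing the Euler equation with $v=u_-^+$ forces $u_-\in -W^{1,p}(\Omega)_+$; and the $L^\infty$/$C^1$ regularity and maximum-principle bootstrap are identical. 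The only blemish is a sign slip in the final differential inequality: for the Pucci--Serrin maximum principle one needs ${\rm div}\,a(\nabla(-u_-))\le c(-u_-)^{p-1}$, i.e.\ $-{\rm div}\,a(\nabla(-u_-))+c(-u_-)^{p-1}\ge 0$, not $-{\rm div}\,a(\nabla(-u_-))\le c(-u_-)^{p-1}$ as written; with the sign corrected the conclusion $u_-\in -D_+$ follows as in Proposition \ref{sp}.
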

\begin{proof}
Fix $\lambda>0$. By Lemma \ref{cn} $\hat\varphi^-_\lambda$ is coercive. Besides, it is sequentially weakly l.s.c., hence we can find $u_-\in\w$ s.t.
\[\hat\varphi^-_\lambda(u_-)=\inf_{u\in\w}\hat\varphi^-_\lambda(u).\]
Reasoning as in Proposition \ref{sp} we see that $\hat\varphi^-_\lambda(u_-)<0$, in particular $u_-\neq 0$. For all $v\in\w$ we have
\beq\label{sn1}
\langle A(u_-),v\rangle+\int_\Omega(\xi(x)+\mu)|u_-|^{p-2}u_-v\,dx+\int_{\partial\Omega}\beta(x)|u_-|^{p-2}u_-v\,d\sigma=\int_\Omega k^-_\lambda(x,u_-)v\,dx.
\eeq
Choosing $v=u_-^+$ in \eqref{sn1} and applying ${\bf H}_\beta$ and Lemma \ref{ah} \ref{ah3}, we get
\[\frac{c_2}{p-1}\|\nabla u_-^+\|_p^p+\int_\Omega(\xi(x)+\mu)(u_-^+)^p\,dx\le 0,\]
hence $u_-\in -\w_+\setminus\{0\}$. Then \eqref{sn1} becomes for all $v\in\w$
\[\langle A(u_-),v\rangle+\int_\Omega\xi(x)|u_-|^{p-2}u_-v\,dx+\int_{\partial\Omega}\beta(x)|u_-|^{p-2}u_-v\,d\sigma=\int_\Omega[\lambda g(x,u_-)+f(x,u_-)]v\,dx,\]
i.e., $u_-$ is a solution of \eqref{rp}. Reasoning as in \cite{PR4,PR5} we deduce $u\in L^\infty(\Omega)$. Then nonlinear regularity theory \cite{L} applies, yielding $u_-\in C^1(\overline\Omega)\setminus\{0\}$. By ${\bf H}_g$ \ref{hg3} we have for a.a.\ $x\in\Omega$, all $t\le 0$
\[g(x,t)\le 0,\]
while by ${\bf H}_f$ \ref{hf5} \ref{hf6} we can find $c_{50}>0$ s.t.\ for a.a.\ $x\in\Omega$, all $t\le 0$
\[f(x,t)t\ge -c_{50}|t|^p.\]
So, for a.a.\ $x\in\Omega$ we have 
\[{\rm div}\,a(\nabla (-u_-))\le(\|\xi\|_\infty+c_{50})(-u_-)^{p-1}.\]
By the nonlinear maximum principle \cite{PS} we get $-u_-\in D_+$, i.e., $u_-\in -D_+$.
\end{proof}

\section{Extremal constant sign solutions and nodal solution}\label{sec4}

\noindent
In this section our purpose is twofold: first we improve the results of Propositions \ref{sp} and \ref{sn} by proving that problem \eqref{rp} admits {\em extremal} constant sign solutions, i.e., a smallest positive solution and a biggest negative solution. Then we use truncations and a Morse-theoretic argument to prove existence of a {\em nodal} solution, thus completing the proof of Theorem \ref{main}.
\vskip2pt
\noindent
Preliminarily, we note that by ${\bf H}_g$ \ref{hg3}, ${\bf H}_f$ \ref{hf2} \ref{hf5} we can find $c_{51}>\|\xi\|_\infty$ s.t.\ for all $\lambda>0$, a.a.\ $x\in\Omega$, and all $t\in\R$
\beq\label{auxg}
\lambda g(x,t)t+f(x,t)t\ge\lambda c_9|t|^q-c_{51}(|t|^{p^*}+|t|^p).
\eeq
We introduce an auxiliary Robin problem (with critical growth):
\beq\label{auxp}
\begin{cases}
-{\rm div}\,a(\nabla u)+\xi(x)|u|^{p-2}u=\lambda c_9|u|^{q-2}u-c_{51}(|u|^{p^*-2}u+|u|^{p-2}u) & \text{in $\Omega$} \\
\displaystyle\frac{\partial u}{\partial n_a}+\beta(x)|u|^{p-2}u=0 & \text{on $\partial\Omega$.}
\end{cases}
\eeq
We prove an existence/uniqueness result for constant sign solutions of \eqref{auxp}:

\begin{proposition}\label{aux}
If ${\bf H}_a$, ${\bf H}_\xi$, ${\bf H}_\beta$ hold, then for all $\lambda>0$ \eqref{auxp} admits a unique positive solution $u_*\in D_+$ and a unique negative solution $v_*\in -D_+$.
\end{proposition}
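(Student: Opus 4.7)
The plan is to treat existence and uniqueness separately, and then observe that the argument for the negative solution is symmetric to that for the positive one. Fix $\lambda>0$ and define the truncated energy functional $\psi^+_\lambda:\w\to\R$ by
\[\psi^+_\lambda(u) = \int_\Omega H(\nabla u)\,dx+\frac{1}{p}\int_\Omega(\xi(x)+c_{51})|u|^p\,dx+\frac{1}{p}\int_{\partial\Omega}\beta(x)|u|^p\,d\sigma+\frac{c_{51}}{p^*}\int_\Omega (u^+)^{p^*}\,dx-\frac{\lambda c_9}{q}\int_\Omega (u^+)^q\,dx,\]
which is of class $C^1$ by the Sobolev embedding $\w\hookrightarrow L^{p^*}(\Omega)$. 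Since $\xi+c_{51}>0$ by the choice of $c_{51}$ in \eqref{auxg}, Lemma \ref{ah} \ref{ah4} gives a coercive lower bound of the form $\psi^+_\lambda(u)\ge c\|u\|^p+c_{51}\|u^+\|_{p^*}^{p^*}/p^* - \lambda c_9\|u^+\|_q^q/q$, so $\psi^+_\lambda$ is coercive on $\w$ (recall $q<p<p^*$). The critical term $\|u^+\|_{p^*}^{p^*}$ is convex and continuous on $\w$, hence sequentially weakly lower semicontinuous, and the other terms are sequentially weakly lower semicontinuous by standard arguments; thus $\psi^+_\lambda$ attains its infimum at some $u_*\in\w$.

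To see that $u_*\in D_+\setminus\{0\}$, I would first test $(\psi^+_\lambda)'(u_*)=0$ against $-u_*^-$ and use Lemma \ref{ah} \ref{ah3} together with $\xi+c_{51}>0$ to deduce $u_*^-=0$. Non-triviality follows exactly as in the proof of Proposition \ref{sp}: for any fixed $\phi\in D_+$ and $t>0$ small, the dominant term in $\psi^+_\lambda(t\phi)$ is $-\lambda c_9 t^q\|\phi\|_q^q/q$ since $q<p<p^*$, forcing $\psi^+_\lambda(u_*)<0$. Then $u_*$ is a nontrivial weak solution of \eqref{auxp}, and the regularity arguments of \cite{PR4,PR5} and \cite{L} together with the nonlinear maximum principle of \cite{PS} (applied as in Proposition \ref{sp}) place $u_*$ in $D_+$.

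For uniqueness, I would invoke the convexity of $t\mapsto H_0(t^{1/r})$ granted by hypothesis ${\bf H}_a$ \ref{ha4}. Suppose $u,v\in D_+$ are two positive solutions of \eqref{auxp}. Since $u,v\in D_+$ the quotients $(u^r-v^r)/u^{r-1}$ and $(u^r-v^r)/v^{r-1}$ lie in $\w$ and are admissible test functions in the weak formulation of \eqref{auxp}. A Di\'az--Saa-type inequality, derived from the convexity of $H_0(\cdot^{1/r})$ exactly as in \cite[p.\ 405]{MMP2} or in the $(p,q)$-Laplacian literature, yields
\[\int_\Omega\Bigl\langle a(\nabla u),\nabla\tfrac{u^r-v^r}{u^{r-1}}\Bigr\rangle\,dx-\int_\Omega\Bigl\langle a(\nabla v),\nabla\tfrac{u^r-v^r}{v^{r-1}}\Bigr\rangle\,dx\ge 0.\]
Subtracting the two weak formulations, the boundary and $\xi$-terms combine (using $c_{51}>\|\xi\|_\infty$ and $r\le p$) to give a non-positive contribution, and the reaction side reduces to an integral that, because $q<r\le p<p^*$, has a definite sign on $\{u\ne v\}$; this forces $u=v$ a.e.\ in $\Omega$. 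The negative solution $v_*\in -D_+$ is produced by the entirely analogous minimization of the functional obtained by replacing $u^+$ with $-u^-$ in $\psi^+_\lambda$, and its uniqueness follows by the same Di\'az--Saa argument applied to $|v|^r$ in place of $u^r$. The main obstacle in the plan is the uniqueness step, where the interplay between the general operator $a$, the indefinite potential $\xi$, and the critical exponent $p^*$ must be handled carefully; however the specific structure of \eqref{auxp}, together with the tailored choice $c_{51}>\|\xi\|_\infty$ and the convexity hypothesis ${\bf H}_a$ \ref{ha4}, is precisely what makes the Di\'az--Saa mechanism go through.
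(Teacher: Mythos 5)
Your proposal is correct and follows the paper's proof essentially verbatim: existence by minimizing a coercive truncated functional and noting its infimum is negative, membership in $D_+$ via nonlinear regularity and the nonlinear maximum principle, uniqueness via a D\'iaz--Saa-type argument resting on the convexity of $t\mapsto H_0(t^{1/r})$ from ${\bf H}_a$ \ref{ha4}, and oddness of the auxiliary problem for the negative solution. The only difference is one of packaging: the paper encodes the D\'iaz--Saa step as monotonicity of the G\^ateaux derivative of an auxiliary convex functional $\chi$ on $L^1(\Omega)$ (which absorbs the $(\xi+c_{51})$- and $\beta$-terms into the monotone part at the outset), whereas you directly subtract the two weak formulations tested against $(u^r-v^r)/u^{r-1}$ and $(u^r-v^r)/v^{r-1}$ and track the sign of each remaining term; both versions rely on the same inequality and the same sign analysis using $q<r\le p<p^*$ and $c_{51}>\|\xi\|_\infty$.
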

\begin{proof}
First we prove existence of a positive solution. Set for all $u\in\w$
\begin{align*}
&\gamma^+_\lambda(u)=\int_\Omega H(\nabla u)\,dx+\frac{1}{p}\int_\Omega\xi(x)|u|^p\,dx+\frac{1}{p}\int_{\partial\Omega}\beta(x)|u|^p\,d\sigma+\frac{\mu}{p}\|u^-\|_p^p \\
&-\frac{\lambda c_9}{q}\|u^+\|_q^q+\frac{c_{51}}{p^*}\|u^+\|_{p^*}^{p^*}+\frac{c_{51}}{p}\|u^+\|_p^p,
\end{align*}
where $\mu>\|\xi\|_\infty$ is defined as in \eqref{mu}. By Lemma \ref{ah} \ref{ah4} and recalling that $c_{51}>\|\xi\|_\infty$, we have for all $u\in\w$
\begin{align*}
&\gamma^+_\lambda(u) \ge \frac{c_2}{p(p-1)}\|\nabla u\|_p^p+\frac{1}{p}\int_\Omega(\xi(x)+\mu)(u^-)^p\,dx \\
&+\frac{1}{p}\int_\Omega(\xi(x)+c_{51})(u^+)^p\,dx+\frac{c_{51}}{p^*}\|u^+\|_{p^*}^{p^*}-\frac{\lambda c_9}{q}\|u^+\|_q^q \\
&\ge c_{52}\|u\|^p-\lambda c_{53}\|u\|^q \ (c_{52},c_{53}>0),
\end{align*}
and the latter tends to $+\infty$ as $\|u\|\to +\infty$ (since $q<p$). Thus, $\gamma^+_\lambda$ is coercive in $\w$. Besides, $\gamma^+_\lambda$ is sequentially weakly l.s.c. So we can find $u_*\in\w$ s.t.
\[\gamma^+_\lambda(u_*)=\inf_{u\in\w}\gamma^+_\lambda(u).\]
Arguing as in Proposition \ref{sp} we see that $\gamma^+_\lambda(u_*)<0$, hence $u_*\neq 0$. For all $v\in\w$ we have
\begin{align*}
&\langle A(u_*),v\rangle+\int_\Omega\xi(x)|u_*|^{p-2}u_*v\,dx+\int_{\partial\Omega}\beta(x)|u_*|^{p-2}u_*v\,d\sigma+\mu\int_\Omega(u^-_*)^{p-1}v\,dx \\
&= \lambda c_9\int_\Omega(u^+_*)^{q-1}v\,dx-c_{51}\int_\Omega[(u^+_*)^{p^*-1}+(u^+_*)^{p-1}]v\,dx.
\end{align*}
Choosing $v=u^-_*$ yields, by Lemma \ref{ah} \ref{ah3},
\[\frac{c_2}{p-1}\|\nabla u^-_*\|_p^p+\int_\Omega(\xi(x)+\mu)(u^-_*)^p\,dx \le 0,\]
hence $u_*\in\w_+\setminus\{0\}$. As in previous cases we deduce that $u_*\in D_+$ and is a solution of \eqref{auxp}.
\vskip2pt
\noindent
Now we prove uniqueness. Set for all $u\in L^1(\Omega)$
\[\chi(u)=\begin{cases}
\displaystyle\int_\Omega H(\nabla(u^\frac{1}{r}))\,dx+\frac{1}{p}\int_\Omega(\xi(x)+c_{51})u^\frac{p}{r}\,dx+\frac{1}{p}\int_{\partial\Omega}\beta(x)u^\frac{p}{r}\,d\sigma & \text{if $u\ge 0$, $u^\frac{1}{r}\in\w$} \\
+\infty & \text{otherwise.}
\end{cases}\]
We claim that $\chi:L^1(\Omega)\to\R\cup\{+\infty\}$ is convex. Choose $u_1,u_2\in {\rm dom}\,(\chi)$, $\tau\in[0,1]$, and set for all $x\in\Omega$
\[u(x)=\big((1-\tau)u_1(x)+\tau u_2(x)\big)^\frac{1}{r},\]
then by \cite[Lemma 1]{DS} we have for a.a.\ $x\in\Omega$
\[|\nabla u(x)|\le\big((1-\tau)|\nabla(u_1^\frac{1}{r})(x)|^r+\tau|\nabla(u_2^\frac{1}{r})(x)|^r\big)^\frac{1}{r}.\]
By ${\bf H}_a$ \ref{ha1} \ref{ha4} we know that $H_0$ is increasing and $t\mapsto H_0(t^\frac{1}{r})$ is convex in $\R_+$. So we have for a.a.\ $x\in\Omega$
\begin{align*}
&H\big(\nabla((1-\tau)u_1+\tau u_2)^\frac{1}{r}(x)\big) = H_0(|\nabla u(x)|) \le H_0\Big(\big((1-\tau)|\nabla(u_1^\frac{1}{r})(x)|^r+\tau|\nabla(u_2^\frac{1}{r})(x)|^r\big)^\frac{1}{r}\Big) \\
&\le (1-\tau)H_0(|\nabla(u_1^\frac{1}{r})(x)|)+\tau H_0(|\nabla(u_2^\frac{1}{r})(x)|) = (1-\tau) H(\nabla(u_1^\frac{1}{r})(x))+\tau H(\nabla(u_2^\frac{1}{r})(x)),
\end{align*}
so the functional
\[u\mapsto \int_\Omega H(\nabla(u^\frac{1}{r}))\,dx\]
is convex. Besides, since $r\le p$, $c_{51}>\|\xi\|_\infty$, and $\beta\ge 0$, the functional
\[u\mapsto \frac{1}{p}\int_\Omega(\xi(x)+c_{51})u^\frac{p}{r}\,dx+\frac{1}{p}\int_{\partial\Omega}\beta(x)u^\frac{p}{r}\,d\sigma\]
is convex as well. Summarizing, we get the claim. By Fatou's lemma, $\chi$ is l.s.c.\ in $L^1(\Omega)$ and G\^ateaux differentiable at $C^1(\overline\Omega)$-functions.
\vskip2pt
\noindent
Now we assume that $u\in \w_+\setminus\{0\}$ is a solution of \eqref{auxp}. As usual we get $u\in D_+$, in particular $u\in{\rm int}\,(C^1(\overline\Omega)_+)$. So, for all $v\in C^1(\overline\Omega)$ and $t>0$ small enough, we have $u^r+tv\in{\rm int}\,(C^1(\overline\Omega)_+)$. Taking $t>0$ even smaller if necessary, we have as well $u_*^r+tv\in{\rm int}\,(C^1(\overline\Omega)_+)$. By what observed above, we have
\[\langle\chi'(u^r),v\rangle =\frac{1}{r}\int_\Omega\frac{-{\rm div}\,a(\nabla u)+(\xi(x)+c_{51})u^{p-1}}{u^{r-1}}\,v\,dx+\frac{1}{r}\int_\Omega\frac{\beta(x)u^{p-1}}{u^{r-1}}\,v\,d\sigma,\]
and a similar relation holds for $u_*$. Since $\chi'$ is a monotone operator, and recalling that $u$, $u_*$ solve \eqref{auxp}, we have
\begin{align*}
&0 \le \langle\chi'(u^r)-\chi'(u^r_*),u^r-u_*^r \rangle \\
&\le \frac{1}{r}\int_\Omega\frac{\lambda c_9u^{q-1}-c_{51}u^{p^*-1}}{u^{r-1}}(u^r-u_*^r)\,dx-\frac{1}{r}\int_\Omega\frac{\lambda c_9u_*^{q-1}-c_{51}u_*^{p^*-1}}{u_*^{r-1}}(u^r-u_*^r)\,dx \\
&= \frac{1}{r}\int_\Omega\big[\lambda c_9(u^{q-r}-u_*^{q-r})-c_{51}(u^{p^*-r}-u_*^{p^*-r})\big](u^r-u_*^r)\,dx \le 0
\end{align*}
(recall that $q<r<p^*$). So we have $u=u_*$, i.e., $u_*$ is the only positive solution of \eqref{auxp}.
\vskip2pt
\noindent
Since problem \eqref{auxp} is odd, it clearly has a unique negative solution $v_*=-u_*\in -D_+$, which concludes the proof.
\end{proof}

\noindent
From now on, for all $u,v\in\w$ we shall write $u\le v$ meaning that $u(x)\le v(x)$ for a.a.\ $x\in\Omega$. Such partial ordering makes $\w$ an ordered Banach space, and for all set $S\subset\w$ we will use accordingly the notions of minorant, majorant, infimum, and supremum of $S$. Similarly, if $u_1\le u_2$ we set
\[[u_1,u_2]=\{u\in\w:\,u_1\le u\le u_2\}.\]
Now we go back to problem \eqref{rp}. For all $\lambda>0$ we denote by $S(\lambda)$ (resp.\ $S_+(\lambda)$, $S_-(\lambda)$) the set of all solutions (resp.\ positive, negative solutions) of \eqref{rp}. From Proposition \ref{sp} we know that $\emptyset\neq S_+(\lambda)\subseteq D_+$ for all $\lambda\in(0,\lambda^*)$, while Proposition \ref{sn} tells us that $\emptyset\neq S_-(\lambda)\subseteq -D_+$ for all $\lambda>0$. Moreover, from \cite{PR6}, \cite{FP} we know that for all $\lambda>0$ the set $S_+(\lambda)$ is downward directed, i.e., for all $u_1,u_2\in S_+(\lambda)$ we can find $u_3\in S_+(\lambda)$ s.t.\ $u_3\le u_1$ and $u_3\le u_2$. Similarly, $S_-(\lambda)$ is upward directed. Now we prove a lower bound for $S_+(\lambda)$ and an upper bound for $S_-(\lambda)$, respectively:

\begin{lemma}\label{bd}
If ${\bf H}$ hold, then
\begin{enumroman}
\item\label{bdp} for all $\lambda\in(0,\lambda^*)$ and all $u\in S_+(\lambda)$, $u\ge u_*$;
\item\label{bdn} for all $\lambda>0$ and all $u\in S_-(\lambda)$, $u\le v_*$.
\end{enumroman}
\end{lemma}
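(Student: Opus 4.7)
The plan for (i) is to construct, for each $u\in S_+(\lambda)$, a positive solution of the auxiliary problem \eqref{auxp} lying pointwise below $u$, and then to identify it with $u_*$ via the uniqueness part of Proposition \ref{aux}. Part (ii) follows by a mirrored construction.

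Fix $\lambda\in(0,\lambda^*)$ and $u\in S_+(\lambda)\subseteq D_+$, and introduce the truncated Carath\'eodory reaction
\[
\tilde e_\lambda(x,t)=\begin{cases}
0 & \text{if }t\le 0,\\
\lambda c_9 t^{q-1}-c_{51}t^{p^*-1} & \text{if }0<t\le u(x),\\
\lambda c_9 u(x)^{q-1}-c_{51}u(x)^{p^*-1} & \text{if }t>u(x),
\end{cases}
\]
its primitive $\tilde E_\lambda(x,t)=\int_0^t\tilde e_\lambda(x,s)\,ds$, and the associated $C^1$-functional
\[
\tilde\tau_\lambda(v)=\int_\Omega H(\nabla v)\,dx+\frac{1}{p}\int_\Omega(\xi(x)+c_{51})|v|^p\,dx+\frac{1}{p}\int_{\partial\Omega}\beta(x)|v|^p\,d\sigma-\int_\Omega\tilde E_\lambda(x,v)\,dx.
\]
Since $c_{51}>\|\xi\|_\infty$ makes $\xi+c_{51}>0$ pointwise and $u\in L^\infty(\Omega)$ makes $\tilde E_\lambda$ bounded, the lower bound in Lemma \ref{ah} \ref{ah4} yields coercivity of $\tilde\tau_\lambda$; sequential weak lower semicontinuity is routine. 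Hence $\tilde\tau_\lambda$ attains its infimum at some $\bar u\in\w$.

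Three properties of $\bar u$ must then be verified. Nontriviality: evaluating $\tilde\tau_\lambda(tu)$ for small $t>0$ (so that $tu\le u$ keeps the truncation inactive) and using the near-origin estimate $H(y)\le c_{42}|y|^r$ furnished by ${\bf H}_a$ \ref{ha4} together with $q<r\le p<p^*$, the contribution $-\lambda c_9 q^{-1}t^q\|u\|_q^q$ dominates, giving $\tilde\tau_\lambda(\bar u)<0$. Sign $\bar u\ge 0$: testing the Euler equation of $\tilde\tau_\lambda$ with $-\bar u^-$ annihilates the reaction (which vanishes on $\R_-$), and Lemma \ref{ah} \ref{ah3} combined with $\xi+c_{51}>0$, $\beta\ge 0$ forces $\bar u^-=0$. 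Upper bound $\bar u\le u$: test the Euler equation of $\tilde\tau_\lambda$ at $\bar u$ with $(\bar u-u)^+$, and compare with \eqref{wrp} for $u$, rewritten by adding $c_{51}u^{p-1}$ to both sides as
\[
\langle A(u),v\rangle+\int_\Omega(\xi+c_{51})u^{p-1}v\,dx+\int_{\partial\Omega}\beta u^{p-1}v\,d\sigma=\int_\Omega\bigl[\lambda g(x,u)+f(x,u)+c_{51}u^{p-1}\bigr]v\,dx,
\]
tested with the same $(\bar u-u)^+$. Subtracting, every term on the left-hand side is nonnegative (monotonicity of $A$, strict monotonicity of $t\mapsto t^{p-1}$ on $\R_+$ together with $\xi+c_{51}>0$ and $\beta\ge 0$), while the right-hand side reads
\[
\int_{\{\bar u>u\}}\bigl[\lambda c_9 u^{q-1}-c_{51}u^{p^*-1}-\lambda g(x,u)-f(x,u)-c_{51}u^{p-1}\bigr](\bar u-u)\,dx\le 0
\]
by \eqref{auxg} divided by $u(x)>0$; hence $|\{\bar u>u\}|=0$.

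With $\bar u\in[0,u]\setminus\{0\}$ the truncation is inactive on $\bar u$, so $\bar u$ weakly solves \eqref{auxp}; Proposition \ref{aux} then gives $\bar u=u_*\in D_+$, proving $u_*\le u$. Part (ii) follows by the symmetric construction on the interval $[u(x),0]$ for $u\in S_-(\lambda)\subseteq -D_+$: the analogous truncation and functional (with the roles of positive and negative parts interchanged, relying on the monotonicity of $t\mapsto|t|^{p-2}t$ on all of $\R$) produce a minimizer $\bar v\in[u,0]\setminus\{0\}$ solving \eqref{auxp}, and uniqueness yields $\bar v=v_*$, hence $u\le v_*$. The only genuine obstacle is the sign-indefiniteness of $\xi$ in the comparison step: a naive subtraction would leave a term $\int\xi(\bar u^{p-1}-u^{p-1})(\bar u-u)^+\,dx$ of indefinite sign. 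Absorbing $c_{51}|v|^p$ into the `quadratic' part of the functional (legitimate since $c_{51}>\|\xi\|_\infty$ by the very derivation of \eqref{auxg}) replaces $\xi$ by the strictly positive $\xi+c_{51}$, at the price of an extra $c_{51}u^{p-1}$ on the right-hand side of the rewritten \eqref{rp}, and this excess is exactly what \eqref{auxg} controls.
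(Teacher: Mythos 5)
Your argument is correct and follows essentially the same route as the paper's: truncate the auxiliary reaction at the fixed solution $u$, minimize the resulting coercive and weakly lower semicontinuous functional, verify the minimizer is nontrivial, nonnegative, and bounded above by $u$, and then identify it with $u_*$ via the uniqueness in Proposition~\ref{aux}. The only cosmetic difference is that you absorb $c_{51}|t|^{p-2}t$ into the left-hand side (so the quadratic part uses $\xi+c_{51}$), whereas the paper absorbs $\mu|t|^{p-2}t$ and correspondingly keeps a $-(c_{51}-\mu)t^{p-1}$ term inside the truncated reaction; both shifts make the zero-order coefficient strictly positive and the comparison step works identically.
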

\begin{proof}
We prove \ref{bdp}. Fix $\lambda\in(0,\lambda^*)$, $u\in S_+(\lambda)$. Since $u\in D_+$, we can set for all $(x,t)\in\Omega\times\R$
\[h^+_\lambda(x,t)=\begin{cases}
0 & \text{if $t<0$} \\
\lambda c_9t^{q-1}-c_{51}t^{p^*-1}-(c_{51}-\mu)t^{p-1} & \text{if $0\le t\le u(x)$} \\
\lambda c_9u(x)^{q-1}-c_{51}u(x)^{p^*-1}-(c_{51}-\mu)u(x)^{p-1} & \text{if $t>u(x)$}
\end{cases}\]
and
\[H^+_\lambda(x,t)=\int_0^t h^+_\lambda(x,\tau)\,d\tau.\]
Then we define a functional $\hat\gamma^+_\lambda\in C^1(\w)$ by setting for all $u\in\w$
\[\hat\gamma^+_\lambda(u)=\int_\Omega H(\nabla u)\,dx+\frac{1}{p}\int_\Omega(\xi(x)+\mu)|u|^p\,dx+\frac{1}{p}\int_{\partial\Omega}\beta(x)|u|^p\,d\sigma-\int_\Omega H^+_\lambda(x,u)\,dx.\]
Since $\mu>\|\xi\|_\infty$, and $h^+_\lambda(x,\cdot)$ is bounded in $\R$ for a.a.\ $x\in\R$, $\hat\gamma^+_\lambda$ is coercive and sequentially weakly l.s.c. So there exists $\hat u_*\in\w$ s.t.
\[\hat\gamma^+_\lambda(\hat u_*)=\inf_{u\in\w}\hat\gamma^+_\lambda(u)<0,\]
in particular $\hat u_*\neq 0$. For all $v\in\w$ we have
\beq\label{bd1}
\langle A(\hat u_*),v\rangle+\int_\Omega(\xi(x)+\mu)|\hat u_*|^{p-2}\hat u_*v\,dx+\int_{\partial\Omega}\beta(x)|\hat u_*|^{p-2}\hat u_*v\,d\sigma=\int_\Omega h^+_\lambda(x,\hat u_*)v\,dx.
\eeq
Choosing $v=-\hat u^-_*$ in \eqref{bd1}, we have by Lemma \ref{ah} \ref{ah3}
\[\frac{c_2}{p-1}\|\nabla \hat u_*^-\|_p^p+\int_\Omega(\xi(x)+\mu)(\hat u_*^-)^p\,dx\le 0,\]
hence $\hat u_*\in\w_+\setminus\{0\}$. Instead, choosing $v=(\hat u_*-u)^+$ in \eqref{bd1}, applying \eqref{auxg}, and recalling that $u\in S_+(\lambda)$ yields
\begin{align*}
&\langle A(\hat u_*),(\hat u_*-u)^+\rangle+\int_\Omega(\xi(x)+\mu)\hat u_*^{p-1}(\hat u_*-u)^+\,dx+\int_{\partial\Omega}\beta(x)\hat u_*^{p-1}(\hat u_*-u)^+\,d\sigma \\
&= \int_\Omega[\lambda c_9u^{q-1}-c_{51}u^{p^*-1}-(c_{51}-\mu)u^{p-1}](\hat u_*-u)^+\,dx \\
&\le \int_\Omega[\lambda g(x,u)+f(x,u)+\mu u^{p-1}](\hat u_*-u)^+\,dx \\
&= \langle A(u),(\hat u_*-u)^+\rangle+\int_\Omega(\xi(x)+\mu)u^{p-1}(\hat u_*-u)^+\,dx+\int_{\partial\Omega}\beta(x)u^{p-1}(\hat u_*-u)^+\,d\sigma,
\end{align*}
hence
\[\langle A(\hat u_*)-A(u),(\hat u_*-u)^+\rangle+\int_\Omega(\xi(x)+\mu)(\hat u_*^{p-1}-u^{p-1})(\hat u_*-u)^+\,dx\le 0.\]
By Lemma \ref{ah} \ref{ah1}, this implies $\hat u_*\le u$. So \eqref{bd1} becomes for all $v\in\w$
\[\langle A(\hat u_*),v\rangle+\int_\Omega\xi(x)\hat u_*^{p-1}v\,dx+\int_{\partial\Omega}\beta(x)\hat u_*^{p-1}v\,d\sigma=\int_\Omega [\lambda c_9\hat u_*^{q-1}-c_{51}(\hat u_*^{p^*-1}+\hat u_*^{p-1}]v\,dx.\]
We conclude that $\hat u_*$ is a positive solution of \eqref{auxp}, hence by Proposition \ref{aux} we have $\hat u_*=u_*$. Thus, we have $u\ge u_*$.
\vskip2pt
\noindent
Similarly we prove \ref{bdn}.
\end{proof}

\noindent
Using these bounds, we can detect extremal constant sign solutions of \eqref{rp}:

\begin{proposition}\label{ex}
If ${\bf H}$ hold, then
\begin{enumroman}
\item\label{exp} for all $\lambda\in(0,\lambda^*)$ there exists $\underline u_+\in S_+(\lambda)$ s.t.\ $\underline u_+=\inf\,S_+(\lambda)$;
\item\label{exn} for all $\lambda>0$ there exists $\overline u_-\in S_-(\lambda)$ s.t.\ $\overline u_-=\sup\,S_-(\lambda)$.
\end{enumroman}
\end{proposition}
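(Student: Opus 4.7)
My plan is to focus on \ref{exp}, as \ref{exn} will follow by a fully symmetric argument using the upward-directedness of $S_-(\lambda)$ and the upper bound $v_*$ from Lemma \ref{bd}\ref{bdn}. The overall strategy is the classical one for extremal constant-sign solutions: exploit the order-theoretic structure of $S_+(\lambda)$ (downward-directed, as recalled from \cite{PR6,FP}) together with the uniform lower bound $u_*$ and the $(S)_+$-property of $A$ to pass to the infimum.

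First I would invoke a standard lemma on directed families in function lattices (see, e.g., \cite{GP1}) to extract a decreasing sequence $(u_n)\subset S_+(\lambda)$ with $u_n(x)\searrow\underline u_+(x)$ for a.a.\ $x\in\Omega$, where $\underline u_+$ coincides a.e.\ with the pointwise infimum of the whole family $S_+(\lambda)$. Next I'd establish that $(u_n)$ is bounded in $\w$: since the sequence is monotone and $u_1\in D_+\subset C(\overline\Omega)$, we have $0\le u_n\le\|u_1\|_\infty$ a.e., and then by ${\bf H}_g$\ref{hg1} and ${\bf H}_f$\ref{hf1} the reactions $(\lambda g(\cdot,u_n)+f(\cdot,u_n))$ are uniformly bounded in $L^\infty(\Omega)$. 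Testing the weak formulation \eqref{wrp} with $v=u_n$ and applying Lemma \ref{ah}\ref{ah3} together with ${\bf H}_\xi$ and ${\bf H}_\beta$ then gives a uniform bound on $\|\nabla u_n\|_p$, hence on $\|u_n\|$.

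I would then pass to a subsequence such that $u_n\rightharpoonup\underline u_+$ in $\w$ and $u_n\to\underline u_+$ in $L^p(\Omega)$ and $L^p(\partial\Omega)$, test \eqref{wrp} with $v=u_n-\underline u_+$, and use dominated convergence on the right-hand side (legitimate thanks to the uniform $L^\infty$ bound, which dispenses with the Vitali-type argument needed in Lemma \ref{cp}) to obtain $\limsup_n\langle A(u_n),u_n-\underline u_+\rangle\le 0$. The $(S)_+$-property of $A$ then yields $u_n\to\underline u_+$ strongly in $\w$, and passing to the limit in \eqref{wrp} shows that $\underline u_+$ is a weak solution of \eqref{rp}. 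Since $u_n\ge u_*$ for every $n$ by Lemma \ref{bd}\ref{bdp}, also $\underline u_+\ge u_*>0$ a.e., and the regularity plus nonlinear maximum principle argument already used in Proposition \ref{sp} upgrades this to $\underline u_+\in D_+$; combined with the infimum property built into the construction, this yields $\underline u_+=\inf S_+(\lambda)\in S_+(\lambda)$.

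The main obstacle, conceptually, is the first step: the existence of a decreasing sequence realising the infimum of the entire directed family $S_+(\lambda)$, rather than merely of the sequence one writes down, is a standard but non-trivial order-theoretic fact that does not follow from a direct diagonalisation. Once this is in place, the analytic part is a routine combination of the $L^\infty$-control $u_n\le u_1$, Sobolev compactness and the $(S)_+$-property; the only technical point worth flagging is that the almost-critical growth of $f$ is tamed here precisely by this uniform $L^\infty$ bound, which is what allows us to avoid invoking Vitali's theorem.
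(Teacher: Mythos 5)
Your proposal is correct and follows essentially the same route as the paper: extract a decreasing sequence realising $\inf S_+(\lambda)$ via a directed-family lemma (the paper cites \cite[Lemma 3.10, p.\ 178]{HP} rather than \cite{GP1}), obtain boundedness in $\w$ from $0\le u_n\le u_1$ plus Lemma \ref{ah}\ref{ah3}, pass to the weak limit, upgrade to strong convergence by the $(S)_+$-property, and conclude $\underline u_+>0$ via Lemma \ref{bd}\ref{bdp}. The observation that the uniform $L^\infty$ bound $u_n\le u_1$ makes dominated convergence suffice on the reaction term (so no Vitali argument is needed here, unlike in Lemma \ref{cp}) is a useful explicit remark that the paper leaves implicit.
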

\begin{proof}
We prove \ref{exp}. 
By \cite[Lemma 3.10, p.\ 178]{HP} we can find a sequence $(u_n)$ in $S_+(\lambda)$, pointwise decreasing, s.t.
\beq\label{ex0}
\inf_{n\in\N}\,u_n=\inf\,S_+(\lambda).
\eeq
For all $n\in\N$, $v\in\w$ we have
\beq\label{ex1}
\langle A(u_n),v\rangle+\int_\Omega\xi(x)u_n^{p-1}v\,dx+\int_{\partial\Omega}\beta(x)u_n^{p-1}v\,d\sigma=\int_\Omega[\lambda g(x,u_n)+f(x,u_n)]v\,dx.
\eeq
Choosing $v=u_n$ in \eqref{ex1}, recalling that $0\le u_n\le u_1$, and using Lemma \ref{ah} \ref{ah3}, we see that $(u_n)$ is bounded in $\w$. Passing to a subsequence, we have $u_n\rightharpoonup\underline u_+$ in $\w$, $u_n\to \underline u_+$ in $L^p(\Omega)$ and $L^p(\partial\Omega)$. In particular $\underline u_+\in\w_+$. Choosing $v=u_n-\underline u_+$ in \eqref{ex1} and passing to the limit as $n\to\infty$ then provides
\[\lim_n\langle A(u_n),u_n-\underline u_+\rangle=0,\]
which by the $(S)_+$ property of $A$ implies $u_n\to\underline u_+$ in $\w$. Once again we use \eqref{ex1} and for all $v\in\w$ we have
\[\langle A(\underline u_+),v\rangle+\int_\Omega\xi(x)\underline u_+^{p-1}v\,dx+\int_{\partial\Omega}\beta(x)\underline u_+^{p-1}v\,d\sigma=\int_\Omega[\lambda g(x,\underline u_+)+f(x,\underline u_+)]v\,dx.\]
So $\underline u_+\in S(\lambda)$. Lemma \ref{bd} \ref{bdp} implies $\underline u_+\ge u_*>0$, so $\underline u_+\in S_+(\lambda)$. Then by \eqref{ex0} we have $u\ge\underline u_+$ for all $u\in S_+(\lambda)$.
\vskip2pt
\noindent
Similarly we prove \ref{exn}.
\end{proof}

\noindent
Let us recall a basic notion from Morse theory (see \cite[Definition 6.43]{MMP}). Let $\varphi$ be a $C^1$-functional defined on a Banach space $X$, and $u\in X$ be an isolated critical point of $\varphi$, i.e., there exists a neighborhood $U$ of $u$ s.t.\ $u$ is the only critical point of $\varphi$ in $U$. For all $k\in\N$, we define the $k$-{\em th critical group} of $\varphi$ at $u$ as
\[C_k(\varphi,u)=H_k\big(\{v\in U:\,\varphi(v)\le\varphi(u)\},\{v\in U:\,\varphi(v)\le\varphi(u),\,v\neq u\}\big),\]
where $H_k(\cdot,\cdot)$ denotes the $k$-th singular homology group of a topological pair of sets (such definition is independent of $U$). Now we can perform our final step and produce a nodal solution:

\begin{proposition}\label{nod}
If ${\bf H}$ hold, then for all $\lambda\in(0,\lambda^*)$ problem \eqref{rp} admits a nodal solution $\tilde u\in C^1(\overline\Omega)\setminus\{0\}$ s.t.\ for all $x\in\Omega$
\[\overline u_-(x)\le\tilde u(x)\le\underline u_+(x).\]
\end{proposition}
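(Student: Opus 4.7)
Fix $\lambda\in(0,\lambda^*)$ and let $\overline u_-\in -D_+$, $\underline u_+\in D_+$ be the extremal constant sign solutions obtained in Proposition \ref{ex}. The plan is to build a ``double truncation'' of the reaction at the barriers $\overline u_-$ and $\underline u_+$, produce a nontrivial critical point for the corresponding functional via the mountain pass theorem between two local minimizers, and show that this critical point is actually different from $0$ by comparing critical groups.

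First I would define, for a.a.\ $x\in\Omega$ and all $t\in\R$,
\[
\tilde k_\lambda(x,t)=\begin{cases}
\lambda g(x,\overline u_-(x))+f(x,\overline u_-(x))+\mu|\overline u_-(x)|^{p-2}\overline u_-(x) & \text{if $t<\overline u_-(x)$,}\\
\lambda g(x,t)+f(x,t)+\mu|t|^{p-2}t & \text{if $\overline u_-(x)\le t\le\underline u_+(x)$,}\\
\lambda g(x,\underline u_+(x))+f(x,\underline u_+(x))+\mu\underline u_+(x)^{p-1} & \text{if $t>\underline u_+(x)$,}
\end{cases}
\]
together with its one-sided variants $\tilde k^\pm_\lambda(x,t)=\tilde k_\lambda(x,\pm t^\pm)$, and the corresponding primitives $\tilde K_\lambda$, $\tilde K^\pm_\lambda$. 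The truncated energies $\tilde\varphi_\lambda$, $\tilde\varphi^\pm_\lambda$ are defined in analogy with $\hat\varphi^\pm_\lambda$, but using $\tilde K_\lambda$ and $\tilde K^\pm_\lambda$ in place of $K^\pm_\lambda$. Since the truncated reactions are uniformly bounded, each of these functionals is in $C^1(\w)$, coercive and sequentially weakly l.s.c. A standard argument using the monotonicity of $A$ (via Lemma \ref{ah} \ref{ah1}) and the weak comparison with the barriers shows that every critical point of $\tilde\varphi_\lambda$ lies in $[\overline u_-,\underline u_+]$ and therefore solves \eqref{rp}, while critical points of $\tilde\varphi^+_\lambda$ (resp.\ $\tilde\varphi^-_\lambda$) lie in $[0,\underline u_+]$ (resp.\ $[\overline u_-,0]$).

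Next I would minimize $\tilde\varphi^\pm_\lambda$ and, using the extremality statements in Proposition \ref{ex}, identify the global minimizers as $\underline u_+$ and $\overline u_-$ respectively; moreover each of them has negative energy, as in Proposition \ref{sp}. Since on $[\overline u_-,\underline u_+]$ we have $\tilde\varphi_\lambda=\tilde\varphi^+_\lambda$ on the positive cone and $\tilde\varphi_\lambda=\tilde\varphi^-_\lambda$ on the negative cone, both $\underline u_+$ and $\overline u_-$ are local $C^1(\overline\Omega)$-minimizers of $\tilde\varphi_\lambda$; by the well known equivalence principle (see \cite{MMP1,MP,PR5} and the references therein, whose proof requires only the nonlinear regularity of \cite{L} together with ${\bf H}_a$) they are also local $\w$-minimizers. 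We may assume $K(\tilde\varphi_\lambda)$ is finite (otherwise one extracts a nodal solution directly from $[\overline u_-,\underline u_+]\setminus(\{\overline u_-\}\cup\{\underline u_+\})\cup\{0\})$ by extremality). Suppose WLOG that $\tilde\varphi_\lambda(\overline u_-)\le\tilde\varphi_\lambda(\underline u_+)$. Using coercivity in place of $(C)$, and a standard argument to find a ring around $\underline u_+$ where the functional stays strictly above $\tilde\varphi_\lambda(\underline u_+)$, the mountain pass theorem yields $\tilde u\in K(\tilde\varphi_\lambda)$ with $\tilde u\neq\underline u_+,\overline u_-$ and
\[
C_1(\tilde\varphi_\lambda,\tilde u)\neq 0.
\]

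The decisive step—this is the main obstacle—is to show $\tilde u\neq 0$, which I would do by computing the critical groups of $\tilde\varphi_\lambda$ at $0$ and showing $C_k(\tilde\varphi_\lambda,0)=0$ for every $k\in\N$. The point is that, by ${\bf H}_g$ \ref{hg3} \ref{hg5} and ${\bf H}_f$ \ref{hf6} \ref{hf7}, the reaction is dominated by the concave term $c_9|t|^q$ (with $q<r\le p$) on a small interval $|t|\le\delta_1\wedge\delta_0$; using Lemma \ref{ah} \ref{ah4} (in particular the convexity of $t\mapsto H_0(t^{1/r})$ and the estimate $H_0(t)\le c_{42}t^r$ for small $t$) one gets that $\tilde\varphi_\lambda(tu)\to -\infty$ holds as soon as $t>0$ is small enough for any $u$ in a suitable set. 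A standard deformation based on the flow $t\mapsto tu/\|u\|_*$ (cf.\ \cite{MMP,PR3} and the references therein) then retracts a small sublevel set around $0$ onto a contractible set minus a point, giving the vanishing of all critical groups at $0$. Comparing with $C_1(\tilde\varphi_\lambda,\tilde u)\neq 0$ forces $\tilde u\neq 0$. Since $\tilde u\in[\overline u_-,\underline u_+]\cap S(\lambda)$ and cannot be a constant sign solution (by extremality of $\overline u_-,\underline u_+$ and Proposition \ref{ex}), $\tilde u$ is nodal. Finally $\tilde u\in C^1(\overline\Omega)$ follows from the boundedness argument of \cite{PR4,PR5} and the regularity theory of \cite{L}.
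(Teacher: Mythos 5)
Your proposal follows essentially the same route as the paper: truncate the reaction at the extremal constant sign solutions $\overline u_-$, $\underline u_+$, identify the endpoints as $\w$-local minimizers of $\tilde\varphi_\lambda$ via the $C^1$--$\w$ equivalence, apply a mountain pass argument to produce a third critical point $\tilde u$ with $C_1(\tilde\varphi_\lambda,\tilde u)\neq 0$, show $C_k(\tilde\varphi_\lambda,0)=0$ for all $k$ using the $q$-concave domination near $0$, and conclude $\tilde u\neq 0$, hence nodal by extremality.

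Two small inaccuracies in your sketch, neither fatal. First, when you write ``$\tilde\varphi_\lambda(tu)\to-\infty$ holds as soon as $t>0$ is small enough,'' you presumably mean that $\tilde\varphi_\lambda(tu)<0$ for $t>0$ small (since $\tilde\varphi_\lambda$ is continuous with $\tilde\varphi_\lambda(0)=0$, it cannot blow down to $-\infty$ along a ray approaching $0$); this is what feeds the deformation argument yielding $C_k(\tilde\varphi_\lambda,0)=0$. Second, the interval on which the concave estimate from ${\bf H}_g$ and ${\bf H}_f$ applies to the \emph{truncated} reaction is not merely $|t|\le\delta_0\wedge\delta_1$ but $|t|\le\delta_3:=\min\{\delta_0,\delta_1,\min_{\overline\Omega}\underline u_+,\min_{\overline\Omega}(-\overline u_-)\}$, since outside $[\overline u_-(x),\underline u_+(x)]$ the reaction is frozen. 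Because $\underline u_+,-\overline u_-\in D_+$, this $\delta_3$ is still positive, so the conclusion stands once the definition is adjusted.
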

\begin{proof}
Fix $\lambda\in(0,\lambda^*)$, and let $\overline u_-\in S_-(\lambda)$, $\underline u_+\in S_+(\lambda)$ be given by Proposition \ref{ex}. We set for all $(x,t)\in\Omega\times\R$
\[\tilde k_\lambda(x,t)=\begin{cases}
\lambda g(x,\overline u_-(x))+f(x,\overline u_-(x))+\mu|\overline u_-(x)|^{p-2}\overline u_-(x) & \text{if $t<\overline u_-(x)$} \\
\lambda g(x,t)+f(x,t)+\mu|t|^{p-2}t & \text{if $\overline u_-(x)\le t\le\underline u_+(x)$} \\
\lambda g(x,\underline u_+(x))+f(x,\underline u_+(x))+\mu\underline u_+(x)^{p-1} & \text{if $t>\underline u_+(x)$}
\end{cases}\]
(with $\mu>0$ given by \eqref{mu}), as well as
\[\tilde K_\lambda(x,t)=\int_0^t\tilde k_\lambda(x,\tau)\,d\tau.\]
Now we set for all $u\in\w$
\[\tilde\varphi_\lambda(u)=\int_\Omega H(\nabla u)\,dx+\frac{1}{p}\int_\Omega(\xi(x)+\mu)|u|^p\,dx+\frac{1}{p}\int_{\partial\Omega}\beta(x)|u|^p\,d\sigma-\int_\Omega\tilde K_\lambda(x,u)\,dx.\]
Clearly $\tilde\varphi_\lambda\in C^1(\w)$. We study now the properties of its critical set:
\beq\label{nod1}
K(\tilde\varphi_\lambda)\subseteq[\overline u_-,\underline u_+]\cap S(\lambda).
\eeq
Indeed, let $u\in K(\tilde\varphi_\lambda)$. For all $v\in\w$ we have
\beq\label{nod2}
\langle A(u),v\rangle+\int_\Omega(\xi(x)+\mu)|u|^{p-2}uv\,dx+\int_{\partial\Omega}\beta(x)|u|^{p-2}uv\,d\sigma=\int_\Omega\tilde k_\lambda(x,u)v\,dx.
\eeq
Choosing $v=(u-\underline u_+)^+$ in \eqref{nod2} we have
\begin{align*}
&\langle A(u),(u-\underline u_+)^+\rangle+\int_\Omega(\xi(x)+\mu)|u|^{p-2}u(u-\underline u_+)^+\,dx+\int_{\partial\Omega}\beta(x)|u|^{p-2}u(u-\underline u_+)^+\,d\sigma \\
&= \int_\Omega[\lambda g(x,\underline u_+)+f(x,\underline u_+)+\mu\underline u_+^{p-1}](u-\underline u_+)^+\,dx \\
&= \langle A(\underline u_+),(u-\underline u_+)^+\rangle+\int_\Omega(\xi(x)+\mu)\underline u_+^{p-1}(u-\underline u_+)^+\,dx+\int_{\partial\Omega}\beta(x)\underline u_+^{p-1}(u-\underline u_+)^+\,d\sigma,
\end{align*}
i.e.,
\[\langle A(u)-A(\underline u_+),(u-\underline u_+)^+\rangle+\int_\Omega(\xi(x)+\mu)(u^{p-1}-\underline u_+^{p-1})(u-\underline u_+)^+\,dx\le 0.\]
Since $\mu>\|\xi\|_\infty$, we have $u\le\underline u_+$. Similarly, choosing $v=(\overline u_--u)^+$ in \eqref{nod2} we get $u\ge\overline u_-$. Thus, \eqref{nod2} becomes for all $v\in\w$
\[\langle A(u),v\rangle+\int_\Omega\xi(x)|u|^{p-2}uv\,dx+\int_{\partial\Omega}\beta(x)|u|^{p-2}uv\,d\sigma=\int_\Omega [\lambda g(x,u)+f(x,u)]v\,dx,\]
hence $u\in S(\lambda)$. In particular, by nonlinear regularity theory \cite{L} we have $u\in C^1(\overline\Omega)$.
\vskip2pt
\noindent
Now set for all $(x,t)\in\Omega\times\R$
\[\tilde k^\pm_\lambda(x,t)=\tilde k_\lambda(x,\pm t^\pm), \ \tilde K^\pm_\lambda(x,t)=\int_0^t\tilde k^\pm_\lambda(x,\tau)\,d\tau,\]
and the corresponding functionals $\tilde\varphi^\pm_\lambda\in C^1(\w)$ defined for all $u\in\w$ by
\[\tilde\varphi^\pm_\lambda(u)=\int_\Omega H(\nabla u)\,dx+\frac{1}{p}\int_\Omega(\xi(x)+\mu)|u|^p\,dx+\frac{1}{p}\int_{\partial\Omega}\beta(x)|u|^p\,d\sigma-\int_\Omega\tilde K^\pm_\lambda(x,u)\,dx.\]
For the critical sets of such functionals we have a complete description:
\beq\label{nod3}
K(\tilde\varphi^+_\lambda)=\{0,\underline u_+\}.
\eeq
Indeed, clearly $0,\underline u_+\in K(\tilde\varphi^+_\lambda)$. Besides, reasoning as above we see that for all $u\in K(\tilde\varphi^+_\lambda)\setminus\{0\}$
\[u\in[0,\underline u_+]\cap S_+(\lambda).\]
Then, by Proposition \ref{ex} \ref{exp} we have $u\ge\underline u_+$, hence $u=\underline u_+$. Similarly we get
\beq\label{nod4}
K(\tilde\varphi^-_\lambda)=\{0,\overline u_-\}.
\eeq
We prove now that $\underline u_+$, $\overline u_-$ are local minimizers of $\tilde\varphi_\lambda$. We only deal with $\underline u_+$, as the case of $\overline u_-$ is analogous. Since $\tilde k^+_\lambda(x,\cdot)$ is bounded in $\R$ for a.a.\ $x\in\Omega$, the functional $\tilde\varphi^+_\lambda$ is coercive, beside being sequentially weakly l.s.c. So we can find $\tilde u_+\in\w$ s.t.
\[\tilde\varphi^+_\lambda(\tilde u_+)=\inf_{u\in\w}\tilde\varphi^+_\lambda(u)<0,\]
in particular $\tilde u_+\neq 0$. By \eqref{nod3}, then, we have $\tilde u_+=\underline u_+$, i.e., $\underline u_+$ is a global minimizer of $\tilde\varphi^+_\lambda$. We recall that
\[\underline u_+\in D_+\subseteq{\rm int}\,(C^1(\overline\Omega)_+),\]
and clearly the functionals $\tilde\varphi_\lambda$, $\tilde\varphi^+_\lambda$ agree on $C^1(\overline\Omega)_+$. So $\underline u_+$ is a $C^1(\overline\Omega)$-local minimizer of $\tilde\varphi_\lambda$, namely there exists $\rho>0$ s.t.\ for all $u\in C^1(\overline\Omega)$ with $\|u-\underline u_+\|_{C^1(\overline\Omega)}<\rho$ we have
\[\tilde\varphi_\lambda(u)\ge\tilde\varphi_\lambda(\underline u_+).\]
By the results of \cite{PR5}, $\underline u_+$ is as well a $\w$-local minimizer of $\tilde\varphi_\lambda$, namely there exists $c_{54}=c_{54}(\rho)>0$ s.t.\ for all $u\in\w$ with $\|u-\underline u_+\|<c_{54}$ we have
\[\tilde\varphi_\lambda(u)\ge\tilde\varphi_\lambda(\underline u_+).\]
Without loss of generality we may assume $\tilde\varphi_\lambda(\overline u_-)\le\tilde\varphi_\lambda(\underline u_+)$, and that the set $K(\tilde\varphi_\lambda)$ is finite. Clearly $\tilde\varphi_\lambda$ satisfies $(C)$. By \cite[Proposition 29]{APS}, then, we can find $\tilde\rho_\lambda\in(0,\|\overline u_--\underline u_+\|)$ s.t., summarizing,
\beq\label{nod5}
\tilde\varphi_\lambda(\overline u_-)\le\tilde\varphi_\lambda(\underline u_+)<\tilde m_\lambda=\inf_{\|u-\underline u_+\|=\tilde\rho_\lambda}\tilde\varphi_\lambda(u).
\eeq
Applying a convenient version of the mountain pass theorem \cite{H}, we see that there exists $\tilde u\in K(\tilde\varphi_\lambda)$ of mountain pass type, namely, s.t.\ the set
\[\{u\in\w:\,\|u-\tilde u\|\le\rho,\,\tilde\varphi_\lambda(u)<\tilde\varphi_\lambda(\tilde u)\}\]
is path disconnected for all $\rho>0$ small enough, and $\tilde\varphi_\lambda(\tilde u)\ge\tilde m_\lambda$. By \eqref{nod5} we have $\tilde u\neq\underline u_+,\overline u_-$, so it remains to prove that $\tilde u\neq 0$. Indeed, since $\tilde u$ is of mountain pass type, by \cite[Corollary 6.81]{MMP2} we have
\beq\label{nod6}
C_1(\tilde\varphi_\lambda,\tilde u)\neq 0.
\eeq
Let $\delta_0,\delta_1>0$ be as in ${\bf H}_g$ \ref{hg5}, ${\bf H}_f$ \ref{hf7}, respectively, and set
\[\delta_3=\min\Big\{\delta_0,\delta_1,\min_{\overline\Omega}\underline u_+,\min_{\overline\Omega}(-\overline u_-)\Big\}>0\]
(recall that $\underline u_+,-\overline u_-\in D_+$). Fix $c_{55}>q$, $\delta\in(0,\delta_3)$. Then for a.a.\ $x\in\Omega$, all $|t|\le\delta$ we have by ${\bf H}_g$ \ref{hg3} \ref{hg5}, ${\bf H}_f$ \ref{hf6} \ref{hf7}
\begin{align*}
&c_{55}\big[\lambda G(x,t)+F(x,t)\big]-[\lambda g(x,t)t+f(x,t)t] \\
&\ge \lambda\big[(c_{55}-q)G(x,t)+\big(qG(x,t)-g(x,t)t\big)\big]-f(x,t)t \\
&\ge \lambda\frac{(c_{55}-q)c_9}{q}|t|^q-c_{56}|t|^p \ (c_{56}>0),
\end{align*}
and the latter is positive for $\delta>0$ small enough (as $q<p$). Thus we can apply \cite[Proposition 6]{PR7} and get for all $k\in\N$
\beq\label{nod7}
C_k(\tilde\varphi_\lambda,0)=0.
\eeq
Comparing \eqref{nod6} and \eqref{nod7} we see that $\tilde u\neq 0$. Furthermore, by \eqref{nod1} we have $\overline u_-\le\tilde u\le\underline u_+$, so as above we get $\tilde u\in S(\lambda)$. Proposition \ref{ex} now implies that $\tilde u$ is nodal.
\end{proof}

\noindent
To conclude, we note that Theorem \ref{main} follows at once from Propositions \ref{sp}, \ref{sn}, and \ref{nod}.

\bigskip

\noindent
{\small {\bf Acknowledgement.} A.\ Iannizzotto and M.\ Marras are members of the Gruppo Nazionale per l'Analisi Matematica, la Probabilit\`a e le loro Applicazioni (GNAMPA) of the Istituto Nazionale di Alta Matematica (INdAM). This work was partially performed during a visit of N.S.\ Papageorgiou at the University of Cagliari, funded by the local Visiting Professor Programme.}

\bigskip

\end{document}